\newcommand{\Email}[1]{{\sl E-mail address:\/} {\rm\textsf{#1}}}
\newtheorem{thm}{Theorem}
\newtheorem{cor}[thm]{Corollary}
\newtheorem{lem}[thm]{Lemma}
\newtheorem{prop}[thm]{Proposition}
\newcommand{\R}{{\mathbb R}}
\newcommand{\Sphere}{{\mathbb S}}
\newcommand{\N}{{\mathbb N}}
\newcommand{\be}[1]{\begin{equation}\label{#1}}
\newcommand{\ee}{\end{equation}}
\renewcommand{\(}{\left(}
\renewcommand{\)}{\right)}
\newcommand{\eps}{\varepsilon}
\newcommand{\seq}[2]{({#1}_{#2})_{#2\in\N}}
\newcommand{\M}{\mathfrak M}
\newcommand{\iM}[1]{\int_{\M}{#1}\,d\kern1pt v_g}
\newcommand{\nrM}[2]{\|{#1}\|_{\L^{#2}(\M)}}
\newcommand{\Lap}{\Delta_g}
\newcommand{\Ric}{\mathfrak R}
\newcommand{\iS}[1]{\int_{\Sphere^d}{#1}\;d\kern1pt v_g}
\newcommand{\nrm}[2]{\|{#1}\|_{\L^{#2}(\M)}}
\renewcommand{\H}{\mathrm H}
\renewcommand{\L}{\mathrm L}
\begin{document}

\title[Flows and rigidity on compact manifolds]{Nonlinear flows and rigidity results on compact manifolds}
\author[J.~Dolbeault, M.J.~Esteban \& M.~Loss]{Jean Dolbeault, Maria J.~Esteban, and Michael Loss}
\address{J.~Dolbeault: Ceremade UMR CNRS nr. 7534, UMR CNRS nr. 7534, Universit\'e Paris-Dauphine, Place de Lattre de Tassigny, 75775 Paris C\'edex~16, France.\vspace*{-6pt}
\Email{dolbeaul@ceremade.dauphine.fr}}
\address{M.J.~Esteban: Ceremade UMR CNRS nr. 7534, UMR CNRS nr. 7534, Universit\'e Paris-Dauphine, Place de Lattre de Tassigny, 75775 Paris C\'edex~16, France.\vspace*{-6pt}
\Email{esteban@ceremade.dauphine.fr}}
\address{M.~Loss: School of Mathematics, Skiles Building, Georgia Institute of Technology, Atlanta GA 30332-0160, USA.
\Email{loss@math.gatech.edu}}
\date{\today}
\begin{abstract} This paper is devoted to \emph{rigidity} results for some elliptic PDEs and to \emph{optimal constants} in related interpolation inequalities of Sobolev type on smooth compact connected Riemannian manifolds without boundaries. Rigidity means that the PDE has no other solution than the constant one at least when a parameter is in a certain range. The largest value of this parameter provides an estimate for the optimal constant in the corresponding interpolation inequality. Our approach relies on a nonlinear flow of porous medium / fast diffusion type which gives a clear-cut interpretation of technical choices of exponents done in earlier works on rigidity. We also establish two integral criteria for rigidity that improve upon known, pointwise conditions, and hold for general manifolds without positivity conditions on the curvature. Using the flow, we are also able to discuss the optimality of the corresponding constants in the interpolation inequalities.
\end{abstract}
\keywords{Compact Riemannian manifold; Laplace-Beltrami operator; Ricci tensor; semilinear elliptic equations; rigidity; nonlinear diffusions; Sobolev inequality; Poincar\'e inequality; interpolation; Gagliardo-Nirenberg inequalities; optimal constant\newline
{\it Mathematics Subject Classification (2010).\/ Primary:}
35J60; 58G03; 58J35; 26D10; 53C21;
{\it Secondary:}
58J60; 46E35; 58E35}

\maketitle
\thispagestyle{empty}

\section{Introduction and main results}\label{Sec:Intro}

In the past decades there has been considerable activity in establishing sharp inequalities using maps or flows. The basic idea is to look for a flow on a function space along which a given functional converges to its optimal value, \emph{i.e.}, one turns the idea of a Lyapunov function, known from dynamical systems theory, on its head. An example is furnished by the relatively recent proofs of the Brascamp-Lieb inequalities using nonlinear heat flows in \cite{MR2377493,MR2448061,MR2077162}. Using the same methods a new Brascamp-Lieb type inequality on $\Sphere^d$ was proved in \cite{MR2077162}. Likewise the reverse Brascamp-Lieb inequalities can also be obtained in this fashion (see \cite{MR2087151}). Another example is the proof of Lieb's sharp Hardy-Littlewood-Sobolev inequality given in \cite{MR1038450} where a discrete map on a function space was constructed whose iterations drives the Hardy-Littlewood-Sobolev functional to its sharp value. Likewise, the sharp form of the Gagliardo-Nirenberg inequalities due to Dolbeault and del Pino can be derived using the porous media flow (see \cite{MR1777035}). The porous media equation can also be used in the context of a special class of Hardy-Littlewood-Sobolev inequalities (see~\cite{MR2745814}).

Closely related are the proofs of sharp inequalities using transportation theory. The earliest use of transportation theory to our knowledge was in Barthe's proof of the Brascamp-Lieb inequalities as well as their converse, in \cite{MR1650312}. Transportation ideas were also applied in \cite{MR2032031} for proving the sharp Gagliardo-Nirenberg inequalities and in \cite{MR2258478} for proving sharp trace inequalities.

In this paper we use a porous media flow on Riemannian manifolds that allow us to give relatively straightforward proofs as well as generalizations of rigidity results of \cite{MR1134481,MR1338283,MR1412446,MR1631581} for a class of nonlinear equations. Before describing the flow, we discuss the rigidity results.

Throughout the paper we assume that $(\M,g)$ is a smooth compact connected Riemannian manifold of dimension $d\ge1$, without boundary with $\Lap$ being the Laplace-Beltrami operator on $\M$. For simplicity, we assume that the volume of $\M$, $\mathrm{vol}(\M)$ is $1$ and we denote by $d\kern1pt v_g$ the volume element. We shall also denote by~$\Ric$ the Ricci tensor. Let~$\lambda_1$ be the lowest positive eigenvalue of $-\Lap$. We shall use the notation $2^*:=\frac{2\,d}{d-2}$ if $d\ge3$, and $2^*:=\infty$ if $d=1$ or $2$.

Let us start with results dealing with manifolds whose curvature is bounded from below and define
\[
\rho:=\inf_{\M}\inf_{\xi\in\Sphere^{d-1}}\Ric(\xi\,,\xi)\,.
\]
\begin{thm}\label{Thm:LV} Let $d\ge2$ be an integer and assume that $\rho$ is positive. If $\lambda$ is a positive parameter such that
\[
\lambda\le(1-\theta)\,\lambda_1+\theta\,\frac{d\,\rho}{d-1}\quad\mbox{where}\quad\theta=\frac{(d-1)^2\,(p-1) }{d\,(d+2)+p-1}\;,
\]
then for any $p\in(2,2^*)$, the equation
\be{Eqn}
-\,\Lap v+\frac\lambda{p-2}\,\(v-v^{p-1}\)=0
\ee
has a unique positive solution $v\in C^2(\M)$, which is constant and equal to $1$.
\end{thm}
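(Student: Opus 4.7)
My plan follows a Bakry--\'Emery / Bochner--Lichnerowicz strategy, reinterpreted through the nonlinear porous-medium type flow announced in the introduction. The starting point is the change of variable $v=u^\beta$ for a parameter $\beta$ to be tuned: a positive $C^2$ solution of~\eqref{Eqn} then corresponds both to a stationary state of a suitably chosen nonlinear flow of fast-diffusion type and to a critical point of the associated ``deficit'' functional $\mathcal F[u]$, which measures the distance to the corresponding Gagliardo--Nirenberg--Sobolev interpolation inequality. The key is to show that the dissipation $-\tfrac{d}{dt}\mathcal F[u(t)]\big|_{t=0}$ is pointwise nonnegative whenever $\lambda$ satisfies the stated bound; stationarity then forces this dissipation to vanish, and in turn forces $\nabla v\equiv 0$.

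Concretely, I would multiply \eqref{Eqn} by $v^{-\alpha}\Lap v$ and by $v^{-\alpha-1}|\nabla v|^2$ (with $\alpha=\alpha(\beta,p)$), integrate by parts, and apply the Bochner--Lichnerowicz identity
\[
\tfrac12\,\Lap|\nabla v|^2=|\mathrm{Hess}\,v|^2+\scal{\nabla v}{\nabla\Lap v}+\Ric(\nabla v,\nabla v)\,.
\]
Splitting $\mathrm{Hess}\,v=\tfrac{\Lap v}{d}\,g+L$ into trace and trace-free parts, so that $|\mathrm{Hess}\,v|^2=\tfrac{(\Lap v)^2}{d}+|L|^2$, and eliminating $\Lap v$ by the equation itself, the various identities combine into an integral balance of the schematic form
\[
0=\iM{\Bigl(|L-\mu\,\mathcal N(v)|^2+\Ric(\nabla v,\nabla v)-c_1\,|\nabla v|^2+c_2\,|\nabla v|^4/v^2\Bigr)}\,,
\]
where $\mathcal N(v)$ is a rank-one tensor built from $\nabla v$ and $v$, and the coefficients depend on $(\lambda,p,\beta,d)$. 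The parameter $\beta$ is tuned to cancel a cubic residue coming from $|\nabla v|^2\Lap v$, so that the quartic term $c_2\,|\nabla v|^4/v^2$ is absorbable into the perfect square.

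At this stage, using only $\Ric(\nabla v,\nabla v)\ge\rho\,|\nabla v|^2$ would yield the Lichnerowicz--type endpoint $\lambda\le d\rho/(d-1)$, corresponding to $\theta=1$. To reach the full convex combination $(1-\theta)\,\lambda_1+\theta\,d\rho/(d-1)$, I would split the identity and control a fraction $(1-\theta)$ of the residual $|\nabla v|^2$ term by the Poincar\'e inequality $\iM{|\nabla v|^2}\ge\lambda_1\iM{(v-\bar v)^2}$ applied to $v$ (or to a suitable power of $v$), together with the fact that $\iM{v\,(v^{p-1}-v)}=(p-2)\iM{(v-\bar v)^2}+\text{higher order}$, itself a consequence of~\eqref{Eqn}. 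The algebraic optimization of the split, constrained to keep the perfect square pointwise nonnegative, produces the specific exponent $\theta=\frac{(d-1)^2(p-1)}{d(d+2)+p-1}$. The main obstacle is precisely this optimization: $\beta$ and the split proportion must be chosen simultaneously so that the perfect square is nonnegative \emph{and} the coefficient of $|\nabla v|^2$ saturates the stated bound on $\lambda$. Once $\lambda$ lies in the admissible range, every summand in the integrand is nonnegative and must vanish identically; this forces $L\equiv 0$ and $\nabla v\equiv 0$, which combined with the volume normalization $\mathrm{vol}(\M)=1$ and the equation yields $v\equiv 1$.
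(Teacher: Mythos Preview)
Your overall skeleton --- the substitution $v=u^\beta$, multiplying the equation by suitable test expressions, integrating the Bochner--Lichnerowicz identity, splitting the Hessian into trace and trace-free parts, and tuning $\beta$ so that the quartic term is absorbed into a perfect square --- is exactly the strategy the paper implements. The difference, and the genuine gap in your proposal, is in how the contribution $(1-\theta)\,\lambda_1$ is obtained.

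You propose to control a fraction $(1-\theta)$ of the residual $\iM{|\nabla v|^2}$ via the first-order Poincar\'e inequality $\iM{|\nabla v|^2}\ge\lambda_1\iM{(v-\bar v)^2}$, and then to link $\iM{(v-\bar v)^2}$ back to the equation through ``$\iM{v\,(v^{p-1}-v)}=(p-2)\iM{(v-\bar v)^2}+\text{higher order}$''. This second relation is only a linearization, valid asymptotically near constants; it is not an identity, and the ``higher order'' remainder has no sign and is not controllable for an arbitrary positive solution $v$. So this route does not close.

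The mechanism in the paper is different and cleaner. After the change of variables and the Bochner step, the integral identity one actually obtains contains the term $\iM{(\Lap u)^2}$ with a positive coefficient (this is Lemma~\ref{Lem:F} combined with Lemma~\ref{Lem:G}). One keeps a fraction $(1-\theta)$ of this term \emph{as is} and bounds it from below by $(1-\theta)\,\lambda_1\iM{|\nabla u|^2}$ using the second-order Poincar\'e inequality
\[
\iM{(\Lap u)^2}\ge\lambda_1\iM{|\nabla u|^2}
\]
(Lemma~\ref{Lem:FirstEigenvalue}, itself a consequence of Cauchy--Schwarz and the ordinary Poincar\'e inequality). The remaining $\theta$-fraction of $\iM{(\Lap u)^2}$ is rewritten via Bochner (Lemma~\ref{BW1}) and combined with the cross term (Lemma~\ref{BW2}) into the perfect square $\|\mathrm Q_g^\theta u\|^2$ plus the Ricci term. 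Then $\Ric\ge\rho\,g$ gives the $\theta\,\tfrac{d\rho}{d-1}$ contribution, and the simultaneous optimization in $(\beta,\theta)$ that you describe is exactly the computation leading to~\eqref{Eqn:Choice}. With this correction your plan becomes the paper's proof of Theorem~\ref{Thm:Main2}, from which Theorem~\ref{Thm:LV} follows via Proposition~\ref{Prop:Estimates}.
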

Such a \emph{rigidity} result has been established in \cite{MR1338283} and \cite[Theorem 2.1]{MR1631581} by J.R.~Licois and L.~V\'eron, and in \cite[Inequality (1.11)]{MR1412446} by D.~Bakry and M.~Ledoux (also see \cite{MR1435336}). An earlier version of it can be found in a work of M.-F.~Bidaut-V\'eron and L.~V\'eron, \cite{MR1134481}, with an estimate only in terms of $\rho$, which is not as good for general manifolds but coincides with the one given above for spheres. A first version of such results can be found in~\cite{MR615628}. Each of these contributions relies either on the Bochner-Lichnerovicz-Weitzenb\"ock formula or on the \emph{carr\'e du champ} method, which are equivalent in the present case. We emphasize that all these results are based on estimates for nonlinear elliptic equations and so far no notion of flow has been associated to them. Computations and the choice of the exponents are rather technical. A first outcome of our approach is to give a simple setting for these issues.

The case of the critical exponent $p=2^*$ when $d\ge3$ is not covered by Theorem~\ref{Thm:LV} and requires more care, due to compactness issues and conformal invariance. For simplicity we shall restrict ourselves to the subcritical case in this paper, at least as far as rigidity results are concerned.

We shall now state a slightly more general result, which does not require that $\rho$ is positive but only involves the constant
\[
\lambda_\star:=\inf_{u\in\H^2\,(\M)}\frac{\displaystyle\iM{\Big[(1-\theta)\,(\Lap u)^2+\frac{\theta\,d}{d-1}\,\Ric(\nabla u,\nabla u)\Big]}}{\iM{|\nabla u|^2}}\;.
\]
Here $\theta$ is defined as in Theorem~\ref{Thm:LV} and we will relate the functions $u$ in the above quotient with the solution $v$ of \eqref{Eqn} using a nonlinear flow below in this section. If $\rho$ is positive, it is not difficult to check (see Lemma~\ref{Lem:FirstEigenvalue}) that
\[
\lambda_\star\ge(1-\theta)\,\lambda_1+\theta\,\frac{d\,\rho}{d-1}\;.
\]
However, we shall simply assume that $\lambda_\star$ is positive but make no assumption on the sign (in the sense of quadratic forms) of the Ricci tensor $\Ric$. Note that as soon as the Ricci curvature is not constant, the above inequality is strict. Our first result generalizes Theorem~\ref{Thm:LV} as follows.
\begin{thm}\label{Thm:Main} With the above notations, if $\lambda$ is such that
\[
0<\lambda<\lambda_\star\;,
\]
then for any $p\in(1,2)\cup(2,2^*)$, Equation~\eqref{Eqn} has a unique positive solution in $C^2(\M)$, which is constant and equal to $1$.
\end{thm}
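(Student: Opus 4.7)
The plan is to treat \eqref{Eqn} as the stationary equation of a porous-medium / fast-diffusion flow on $\M$ and to extract from it a single integral identity that, combined with the variational definition of $\lambda_\star$, forces $v$ to be constant. Let $v\in C^2(\M)$ be a positive solution, and write $v = u^\beta$ with $\beta = \beta(p,d)>0$ to be tuned. The substitution turns \eqref{Eqn} into a quasilinear equation for $u$ whose principal part $\Lap u + (\beta-1)\,|\nabla u|^2/u$ is the generator of a nonlinear diffusion preserving constants, in the spirit of~\cite{MR1777035}. I will not actually run the flow; I use its formal structure only to select the multiplier, namely a linear combination of $\Lap v^{a}$ and $v^{b}\,|\nabla v|^2$, and integrate its product with \eqref{Eqn} over $\M$.

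To unfold the resulting identity I invoke the Bochner-Lichnerowicz-Weitzenb\"ock formula
\[
\tfrac12\,\Lap|\nabla u|^2 = \|\mathrm{Hess}\,u\|^2 + \scal{\nabla u}{\nabla \Lap u} + \Ric(\nabla u,\nabla u),
\]
together with the pointwise splitting $\|\mathrm{Hess}\,u\|^2 = \tfrac1d(\Lap u)^2 + \|\mathrm{Hess}\,u - \tfrac1d(\Lap u)\,g\|^2$. After several integrations by parts on the closed manifold (no boundary terms arise), every cubic-in-$u$ contribution assembles into a quadratic form $Q$ in the pair $\bigl(\mathrm{Hess}\,u,\,\nabla u\otimes\nabla u/u\bigr)$. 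The essential algebraic step is to choose $\beta$ and the multiplier so that (i)~$Q$ is a pointwise nonnegative sum of squares, and (ii) the coefficients of $(\Lap u)^2$ and $\Ric(\nabla u,\nabla u)$ emerge in the proportion $(1-\theta)\,:\,\theta\, d/(d-1)$, with $\theta$ exactly as in Theorem~\ref{Thm:LV}.

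Once this tuning is achieved, the identity reads
\[
\iM{\Big[(1-\theta)\,(\Lap u)^2 + \tfrac{\theta\, d}{d-1}\,\Ric(\nabla u,\nabla u)\Big]} - \lambda \iM{|\nabla u|^2} + \iM{R} = 0,
\]
where $R\ge 0$ pointwise, and $R\equiv0$ forces $u$ to be constant. By the definition of $\lambda_\star$, the first integral is at least $\lambda_\star\,\iM{|\nabla u|^2}$, so
\[
(\lambda_\star-\lambda)\iM{|\nabla u|^2}+\iM{R}\le 0.
\]
Since $\lambda<\lambda_\star$, both nonnegative terms must vanish; hence $\nabla u\equiv 0$, so $u$ and therefore $v$ is constant, and plugging a constant into \eqref{Eqn} yields $v\equiv 1$.

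The principal obstacle is the algebraic tuning of paragraph two: locating $\beta(p,d)$ and the multiplier that make $Q$ a true sum of squares while simultaneously producing the exact combination defining $\lambda_\star$. For $p\in(2,2^*)$ this amounts to the Bakry-Ledoux / Licois-V\'eron computation reinterpreted through the nonlinear flow; for $p\in(1,2)$ the sign of $\beta-1$ flips (slow versus fast diffusion), so one must verify that the same manipulations still deliver a pointwise nonnegative $R$ and the same value of $\theta$. The analytic side is routine: $v\in C^2(\M)$ with $v>0$ gives $u\in C^2(\M)$, and the compactness of $\M$ without boundary legitimates all integrations by parts without boundary contributions.
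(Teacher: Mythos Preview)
Your proposal is correct and follows essentially the same route as the paper: the paper's proof of Theorem~\ref{Thm:Main2} (which yields Theorem~\ref{Thm:Main} via $\lambda_\star\le\Lambda_\star$) multiplies the transformed equation $\Lap u+(\beta-1)\,|\nabla u|^2/u=\frac{\lambda}{\beta(p-2)}(u-u^\kappa)$ by the flow generator $\Lap u+\kappa\,|\nabla u|^2/u$, integrates, and uses Lemmata~\ref{BW1} and~\ref{BW2} to arrive at exactly your displayed identity with $R=\frac{\theta d}{d-1}\,\|\mathrm Q_g^\theta u\|^2$. The ``principal obstacle'' you flag is resolved explicitly in~\eqref{Eqn:Choice}: $\theta=\frac{(d-1)^2(p-1)}{d(d+2)+p-1}$, $\beta=\frac{d+2}{d+3-p}$, and $\kappa=1+\beta(p-2)$; with these values one has $\beta>1$ throughout $p\in(1,2^*)$ when $d\ge3$ (so the sign of $\beta-1$ does not actually flip for $p\in(1,2)$), and the same $\theta$ works on both sides of $p=2$.
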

In this statement we include the case $p\in(1,2)$. We may observe that $\lim_{p\to1_+}\theta(p)=0$, so that $\lim_{p\to1_+}\lambda_\star(p)=\lambda_1$, as soon as $\rho$ is bounded (but eventually negative). Notice that if $\M=\Sphere^d$, then $\lambda_\star=\lambda_1=d\,\rho/(d-1)=d$ since $\rho=d-1$. In this case, the result of Theorem~\ref{Thm:LV} is then optimal, but this was already known from \cite[Theorem 6.1]{MR1134481}. As we shall see in Proposition~\ref{Prop:Estimates}, we always have
\[
\lambda_\star\le\lambda_1
\]
and \eqref{Eqn} has non-constant solutions for any $\lambda>\lambda_1$. We can actually give another integral criterion for rigidity that slightly improves on the result of Theorem~\ref{Thm:Main}. Recalling that~$\theta$ is given as in Theorem~\ref{Thm:LV} by
\[
\theta=\frac{(d-1)^2\,(p-1)}{d\,(d+2)+p-1}\;,
\]
let us define
\[
\mathrm Q_g u:=\mathrm H_gu-\frac gd\,\Delta_gu-\frac{(d-1)\,(p-1)}{\theta\,(d+3-p)}\left[\frac{\nabla u\otimes\nabla u}u-\frac gd\,\frac{|\nabla u|^2}u\right]
\]
where $\mathrm H_gu$ denotes Hessian of $u$, and define
\be{LambdaStar}
\Lambda_\star:=\inf_{u\in\H^2(\M)\setminus\{0\}}\frac{\displaystyle(1-\theta)\iM{(\Delta_gu)^2}+\frac{\theta\,d}{d-1}\iM{\Big[\,\|\mathrm Q_g u\|^2+\Ric(\nabla u,\nabla u)\Big]}}{\displaystyle\iM{|\nabla u|^2}}\;.
\ee
It is obvious that
\[
\lambda_\star\le\Lambda_\star\;,
\]
and it is not hard to see that there is again equality in case of the sphere.
\begin{thm}\label{Thm:Main2} Assume that $\Lambda_\star>0$. For any $p\in(1,2)\cup(2,2^*)$, Equation~\eqref{Eqn} has a unique positive solution in $C^2(\M)$ if $\lambda\in(0,\Lambda_\star)$, which is constant and equal to $1$.\end{thm}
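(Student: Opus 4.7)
The plan is to re-run the nonlinear-diffusion / \emph{carr\'e du champ} argument that underlies Theorem~\ref{Thm:Main}, while keeping a positive quadratic contribution that is discarded in the coarser bound defining $\lambda_\star$. Given a positive $C^2$ solution $v$ of~\eqref{Eqn}, I would substitute $v=u^\beta$ with the same exponent $\beta=\beta(p,d)$ used for Theorem~\ref{Thm:Main}, and test the transformed equation against the natural Fisher-type quantity coming from differentiating the relevant entropy along the flow, namely a multiple of $u^{\sigma}\bigl(\Lap u+\kappa\,|\nabla u|^2/u\bigr)$ with $\kappa$ and $\sigma$ fixed as in the earlier theorem. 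Two integrations by parts together with the Bochner--Lichnerowicz--Weitzenb\"ock identity yield an integral equality of the schematic form
\[
(1-\theta)\iM{(\Lap u)^2}+\frac{\theta\,d}{d-1}\Big[\iM{\|\mathrm H_gu\|^2}+\iM{\Ric(\nabla u,\nabla u)}\Big]+\mathcal R[u]=\lambda\iM{|\nabla u|^2},
\]
where $\mathcal R[u]$ collects integrals of the lower-order gradient corrections $u^{-1}(\Lap u)\,|\nabla u|^2$, $u^{-1}\mathrm H_gu(\nabla u,\nabla u)$ and $u^{-2}|\nabla u|^4$.

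The decisive new step, which turns $\lambda_\star$ into $\Lambda_\star$, is to complete the square in the Hessian variable. Using $\|\mathrm H_gu\|^2=\|\mathrm H_gu-\tfrac gd\Lap u\|^2+\tfrac1d(\Lap u)^2$ and expanding the squared norm of the trace-free tensor $\mathrm Q_gu$ appearing in the statement, the mixed pieces in $\mathcal R[u]$ are absorbed into $\|\mathrm Q_g u\|^2$ while the leftover $(\Lap u)^2$ contribution is packaged into the coefficient $(1-\theta)$. The coefficient $(d-1)(p-1)/(\theta(d+3-p))$ attached to $\tfrac{\nabla u\otimes\nabla u}u-\tfrac gd\tfrac{|\nabla u|^2}u$ in $\mathrm Q_g u$ is precisely the unique scalar for which the cross terms in the expansion of $\|\mathrm Q_g u\|^2$ reproduce the mixed integrals of $\mathcal R[u]$ exactly. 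After this rearrangement one is left with the inequality
\[
(1-\theta)\iM{(\Lap u)^2}+\frac{\theta\,d}{d-1}\iM{\Big[\|\mathrm Q_gu\|^2+\Ric(\nabla u,\nabla u)\Big]}\le\lambda\iM{|\nabla u|^2}.
\]
Inserting this into the variational characterization~\eqref{LambdaStar} of $\Lambda_\star$ and using $\lambda<\Lambda_\star$ forces $\nabla u\equiv 0$, so $u$, and therefore $v$, is constant; \eqref{Eqn} then gives $v\equiv 1$.

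The one genuine obstacle is the algebraic verification that the \emph{same} scalar $(d-1)(p-1)/(\theta(d+3-p))$ balances two independent cross-term conditions produced by the flow computation (one matching the mixed $\mathrm H_gu$-terms, one matching the mixed $\Lap u$-terms). Their compatibility---the algebraic reason why Theorem~\ref{Thm:Main2} is stronger than Theorem~\ref{Thm:Main}---is what corresponds, in the model case of $\S^d$ treated in~\cite{MR1134481}, to the sharpness of the bound. Everything else, namely the choice of test function, the two integrations by parts, and the concluding appeal to~\eqref{LambdaStar}, is a transcription of the proof of Theorem~\ref{Thm:Main}, so no additional analytic ingredient beyond $C^2$-regularity of $v$ is needed.
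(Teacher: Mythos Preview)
Your proposal is correct and follows the paper's approach: the paper's proof is a one-line appeal to Proposition~\ref{Prop:flow}, whose content (Lemmata~\ref{Lem:F} and~\ref{Lem:G}, built on Lemmata~\ref{BW1} and~\ref{BW2}) is exactly the multiply-by-$(\Lap u+\kappa\,|\nabla u|^2/u)$, integrate, apply Bochner, complete-the-square-into-$\|\mathrm Q_g u\|^2$ computation you outline, and your final displayed relation is in fact an equality once $\beta,\theta$ are chosen as in~\eqref{Eqn:Choice}. One small correction to your description of the algebraic obstacle: there are not two matching conditions on the single scalar in $\mathrm Q_g$; rather, Lemma~\ref{BW2} first converts the $\Lap u\,|\nabla u|^2/u$ term into a $|\nabla u|^4/u^2$ piece plus a single $[\mathrm L_g u]\cdot[\nabla u\otimes\nabla u/u]$ cross term, the scalar in $\mathrm Q_g^\theta$ is then fixed by absorbing that one cross term, and the separate requirement that the residual $|\nabla u|^4/u^2$ coefficient $\mu$ vanish is what pins down $\beta$ and~$\theta$.
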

As we shall see below, our approach of rigidity results relies on a nonlinear diffusion equation of porous media (fast diffusion) type. Its interest is that it simplifies the choice of the exponents, introduces non-local quantities and simplifies the discussion of the optimality of the constants in the associated functional inequalities. It also applies to equations which are not of power law type (see Theorem~\ref{Thm:General}) and provides an expression of the error term (see Corollary~\ref{Cor:InequalityRemainder}) in the interpolation inequality
\be{Ineq:Interp}
\nrm{\nabla v}2^2\ge\frac\lambda{p-2}\,\left[\nrm vp^2-\nrm v2^2\right]\quad\forall\,v\in\H^1(\M)\;.
\ee
Notice that this inequality makes sense not only for $p\in(2,2^*)$ but also for $p\in(1,2)$ because in this last case we have $\nrm vp\le\nrm v2$ by H\"older's inequality and $p-2$ is negative. When $p>2$, Inequality~\eqref{Ineq:Interp} has been stated in \cite[Corollary 6.2]{MR1134481} with $\lambda=d\,\rho/(d-1)$, but has also been established in the case of the sphere in \cite{MR1230930} with $\lambda=d$ using spectral estimates and Lieb's sharp Hardy-Littlewood-Sobolev inequality (see~\cite{MR717827}). The case $p\in(1,2)$ has been covered in \cite[Section~3.11]{Demange-PhD}. We shall see in Proposition~\ref{Prop:Estimates} that $\Lambda_\star\le\lambda_1$. As a consequence of the proof of Theorem~\ref{Thm:Main2}, we get also the following result.
\begin{thm}\label{Thm:Inequality} Assume that $\Lambda_\star>0$. For any $p\in[1,2)\cup(2,2^*]$ if $d\ge3$, $p\in(1,2)\cup(2,\infty)$ if $d=1$ or~$2$, Inequality~\eqref{Ineq:Interp} holds for some $\lambda=\Lambda\in[\Lambda_\star,\lambda_1]$. Moreover, if $\Lambda_\star<\lambda_1$ and $p<2^*$, then the optimal constant $\Lambda$ is such that
\[
\Lambda_\star<\Lambda\le\lambda_1\;.
\]
If $p=1$, then $\Lambda=\lambda_1$.
\end{thm}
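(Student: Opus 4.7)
I would start by characterizing the optimal constant in \eqref{Ineq:Interp} as the Rayleigh quotient
\[
\Lambda \;=\; \inf\Big\{\,\tfrac{(p-2)\,\nrm{\nabla v}{2}^2}{\nrm{v}{p}^2 - \nrm{v}{2}^2}\;:\;v \in \H^1(\M),\ v \text{ not a.e.\ constant}\,\Big\},
\]
which is finite and positive because the denominator shares the sign of $p-2$ on non-constant functions (H\"older's inequality on the probability space $(\M, dv_g)$, with equality iff $v$ is constant). The plan is to sandwich $\Lambda$ between $\Lambda_\star$ and $\lambda_1$, and then exclude equality $\Lambda = \Lambda_\star$ when $\Lambda_\star < \lambda_1$.

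For the upper bound $\Lambda \le \lambda_1$, I would test with $v_\eps = 1 + \eps\,\phi$ where $\phi$ is a first eigenfunction, $-\Lap\phi = \lambda_1\phi$ and $\iM{\phi} = 0$. A Taylor expansion (leveraging $\iM{\phi} = 0$) yields
\[
\nrm{v_\eps}{p}^2 - \nrm{v_\eps}{2}^2 \;=\; (p-2)\,\eps^2\,\nrm{\phi}{2}^2 + O(\eps^3), \qquad \nrm{\nabla v_\eps}{2}^2 \;=\; \lambda_1\,\eps^2\,\nrm{\phi}{2}^2,
\]
so the Rayleigh quotient converges to $\lambda_1$ as $\eps \to 0$.

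For the lower bound $\Lambda \ge \Lambda_\star$ with subcritical $p \in (1,2)\cup(2,2^*)$, I would apply the direct method: normalize $\nrm{v}{p} = 1$ along a minimizing sequence and exploit Rellich--Kondrachov compactness to extract a limit $v_* \in \H^1(\M)$, strongly in $\L^p$ and weakly in $\H^1$. If $v_*$ is constant, the expansion above (applied to the fluctuation) shows the limiting Rayleigh quotient is at least $\lambda_1$. Otherwise $v_*$ is a non-constant minimizer; replacing it by $|v_*|$ and invoking the strong maximum principle makes $v_* > 0$. A routine rescaling turns the Euler--Lagrange equation into exactly \eqref{Eqn} with $\lambda = \Lambda$, so Theorem~\ref{Thm:Main2} forces the rescaled minimizer to be $\equiv 1$ as soon as $\Lambda < \Lambda_\star$, a contradiction. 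Since constants cannot achieve the infimum, one concludes $\Lambda \ge \min(\Lambda_\star, \lambda_1) = \Lambda_\star$, using $\Lambda_\star \le \lambda_1$ from Proposition~\ref{Prop:Estimates}.

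For the remaining assertions: when $\Lambda_\star < \lambda_1$, the infimum is attained at a non-constant $v_*$ by the dichotomy above, and equality $\Lambda = \Lambda_\star$ would place $v_*$ exactly at the boundary of the rigidity range; I would rule this out using the strictly positive remainder on non-constants provided by Corollary~\ref{Cor:InequalityRemainder}, forcing $\Lambda > \Lambda_\star$. In the case $d=1$, one checks $\theta = 0$ directly, so $\Lambda_\star$ reduces to $\inf_{u\neq 0}\iM{(\Lap u)^2}/\iM{|\nabla u|^2} = \lambda_1$ by spectral decomposition (the weighted average $\sum\lambda_k^2 c_k^2 / \sum \lambda_k c_k^2$ is minimized at the first eigenmode), and the sandwich collapses to $\Lambda = \lambda_1$. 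The endpoint exponents $p = 1$ (for $d \ge 3$) and $p = 2^*$ would be recovered by passing to the limit in $p$ in the inequality, using continuity of each side and, in the critical case, Sobolev embedding to justify the $\L^p$-limit. The main obstacle is the strict inequality $\Lambda > \Lambda_\star$: one must extract a quantitative gap from the flow-based remainder in Corollary~\ref{Cor:InequalityRemainder}, which is the most delicate ingredient.
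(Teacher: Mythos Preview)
Your overall architecture is sound, but it diverges from the paper's in two places and has one genuine gap.

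For the lower bound $\Lambda\ge\Lambda_\star$, you argue indirectly: produce a minimizer by Rellich--Kondrachov, then invoke the rigidity of Theorem~\ref{Thm:Main2} to force it constant whenever $\Lambda<\Lambda_\star$. The paper instead applies the flow directly (Proposition~\ref{Prop:flow}): for any $v$, run~\eqref{flow} from $u(0,\cdot)=v^{1/\beta}$; since $\mathcal F$ is nonincreasing when $\lambda\le\Lambda_\star$ and tends to $0$ as $t\to\infty$, one gets $\mathcal F[v]\ge0$ with no compactness needed. Both routes work, but yours is circuitous---Theorem~\ref{Thm:Main2} is itself a one-line consequence of the same flow computation at $t=0$---and it burns the compactness argument that the paper reserves for the harder step.

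That harder step is the strict inequality $\Lambda>\Lambda_\star$ when $\Lambda_\star<\lambda_1$, and here your plan is incomplete. You propose to read off a ``strictly positive remainder on non-constants'' from Corollary~\ref{Cor:InequalityRemainder}, but that corollary only asserts $\mathcal R[v]\ge0$; getting $\mathcal R[v]>0$ requires precisely the large-$t$ asymptotics of the flow, namely $\|\mathrm Q_g u\|^2\sim\|\mathrm L_g u\|^2$ as $u$ approaches a constant, after which Lemma~\ref{BW1} collapses the dissipation to $\iM{(\Delta_g u)^2}-\Lambda_\star\iM{|\nabla u|^2}\ge(\lambda_1-\Lambda_\star)\iM{|\nabla u|^2}$. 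The paper uses exactly this to show that any minimizer at $\lambda=\Lambda_\star$ must be constant, and then---rather than stopping there---runs a linearization argument: assuming $\Lambda=\Lambda_\star$, take $\lambda_n\downarrow\Lambda_\star$ and normalized minimizers $v_n$; their limit solves the Euler--Lagrange equation and is therefore constant by the preceding step, so writing $v_n=1+\eps_n w_n$ with $\eps_n\to0$ one finds $\mathcal Q[v_n]\sim\nrm{\nabla w_n}2^2/\nrm{w_n}2^2\ge\lambda_1$, contradicting $\Lambda_\star<\lambda_1$. Your sketch gestures at the right object but does not supply this mechanism.

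Finally, your treatment of $d=1$ does not go through as written: the definition~\eqref{LambdaStar} of $\Lambda_\star$ carries the coefficient $\tfrac{\theta\,d}{d-1}$, which is a $0/0$ form when $d=1$ (and $\Ric$ is vacuous there anyway), so ``$\theta=0$ hence $\Lambda_\star=\lambda_1$'' is formal at best, and Theorem~\ref{Thm:Main2} is not available to feed your compactness argument. The paper handles $d=1$ through the separate computation of Proposition~\ref{Prop:flow-d=1}, which directly gives $\tfrac{1}{2\beta^2}\tfrac d{dt}\mathcal F[u]\le(\lambda-\lambda_1)\iM{|u'|^2}$ and hence $\Lambda=\lambda_1$.
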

While the case $p=2^*$ is not covered in Theorem~\ref{Thm:Main2}, Inequality~\eqref{Ineq:Interp} holds in that case by taking the limit $p\to2^*_-$ when $d\ge3$. In the limit case $p=1$, Inequality~\eqref{Ineq:Interp} is nothing else but the Poincar\'e inequality which is consistent with the fact that $\lim_{p\to1_+}\Lambda_\star(p)=\lim_{p\to1_+}\lambda_\star(p)=\lambda_1$. Using $u=1+\eps\,\varphi$ as a test function where $\varphi$ is an eigenfunction associated with the lowest positive eigenvalue of $-\Lap$, it is a classical result that the optimal constant in~\eqref{Ineq:Interp} is $\lambda\le\lambda_1$. For the same reason, a minimum of
\[
v\mapsto\nrm{\nabla v}2^2-\frac\lambda{p-2}\,\left[\nrm vp^2-\nrm v2^2\right]\,,
\]
under the constraint $\nrm vp=1$ is negative if $\lambda>\lambda_1$. Incidentally, such a minimization provides a non-constant non-negative solution of \eqref{Eqn} (see \cite{MR2252974,DoEsLa2012}). It is natural to wonder what happens when $p=2$. The reader is invited to check that by taking the limit as $p\to2$, one gets a logarithmic Sobolev inequality, which is of independent interest (see for instance \cite{MR1961176} or \cite{DoEsLa2012} for more results in this direction and further references).

\medskip Now let us give some indications on our method. In this paper we consider the flow
\be{flow}
u_t=u^{2-2\,\beta} \(\Delta_gu+\kappa\,\frac{|\nabla u|^2}u\)\,,
\ee
where we set
\be{Eqn:kappa}
\kappa=1+\beta\,(p-2)\;.
\ee
The flow~\eqref{flow} contracts both sides of Inequality~\eqref{Ineq:Interp} and monotonically decreases
\be{functional}
\mathcal F[u]:=\iM{|\nabla(u^\beta)|^2}+\frac{\lambda}{p-2}\left[\iM{u^{2\,\beta}}-{\(\iM{u^{\beta\,p}}\)}^{2/p}\right]
\ee
if $\lambda<\Lambda_\star$. As we shall see in Sections~\ref{Sec:Flow} and~\ref{Sec:Proofs}, the limit of $t\mapsto\mathcal F[u(t,\cdot)]$ is zero, which proves the inequality $\mathcal F[u(t,\cdot)]\ge0$ for any $t\ge0$ and in particular for the initial datum $u(t=0,\cdot)=v^{1/\beta}$. The key ingredient to prove the monotonicity of $t\mapsto\mathcal F[u(t,\cdot)]$ is the inequality
\be{Ineq:New}
(1-\theta)\iM{(\Delta_gu)^2}+\frac{\theta\,d}{d-1}\iM{\Big[\,\|\mathrm Q_g u\|^2+\Ric(\nabla u,\nabla u)\Big]}\ge\Lambda_\star\,\iM{|\nabla u|^2}
\ee
written for any $u=u(t,\cdot)$, $t\ge0$. Equality in the inequality $\frac d{dt}\mathcal F[u(t,\cdot)]\le0$ is achieved only by constant functions. Since $\mathcal F[v^{1/\beta}]=0$ if $v$ is a solution to \eqref{Eqn}, this also shows the rigidity result of Theorem~\ref{Thm:Main}. The parameter $\beta$ in the relation
\[
v=u^\beta
\]
is chosen in such a way that the problem is reduced to \eqref{Ineq:New}, which is much easier to establish than \eqref{Ineq:Interp} because of its homogeneity. For this reason, the choice of $\beta$ is somewhat more transparent than in \cite{MR1134481,MR1631581} (see Section~\ref{Sec:Flow}) and allows for simple generalizations, like the one stated in Theorem~\ref{Thm:General} concerning more general nonlinearities than just power laws. The choice of $\kappa$ is determined by the condition that $\nrm vp$ is invariant under the action of the flow (see Lemma~\ref{Lem:Lp}). The equation \eqref{flow} is of the type of a porous media equation. Let us note in this context that Hamilton's Yamabe flow can be seen as the fast diffusion case of a porous media equation \cite{MR1258912} but it is not clear whether there is any connection between this and \eqref{flow}.

The flow defined by \eqref{flow} has already been used by J.~Demange. In \cite{MR2178944}, he studies critical functional inequalities on manifolds with negative Ricci curvature; such issues are out of the focus of the present paper. In \cite{MR2381156}, he studies the case of a pointwise positive curvature when $p>2$ and the flow he considers is equivalent to that of \eqref{flow}. His computations are done in the $\Gamma_2$ formalism (or \emph{carr\'e du champ} method), cover the subcritical case, and by some additional interpolation estimates allow him to get nonlinear improvements of the inequalities. However, because of the pointwise positivity condition, optimality of the constants even in the leading order term is definitely out of reach in Demange's approach, except in the case of the sphere. For instance, the improved estimates that can be found in \cite[Inequality (1.11)]{MR1412446} are not even recovered. A simpler version of these results in the critical case can be found in C.~Villani's book \cite[Theorems~24.2~(iii) and 25.3, \S~25.5]{MR2459454}. In both of the two presentations, the approach is centered on the decay properties of the various terms involved in the functional $\mathcal F$ when evolved according to~\eqref{flow}. By doing so, improved rates of decay (far from the asymptotic regime) as well as nonlinear functional inequalities are achieved but clearly miss optimality results and the connection with the usual machinery of rigidity theorems. This is the point that we want to emphasize and which is the main contribution of this paper.

In particular, we clarify the following point about Theorem~\ref{Thm:LV}. In \cite{MR1134481}, it is established in a weaker form which corresponds to $\theta=1$. With this choice the flow defined by \eqref{flow} decreases the functional $\mathcal F$ for a whole interval of values of the parameter $\beta$. This was observed in \cite[Section~3.12]{Demange-PhD} with a condition which is not fully explicit. It is, however, stated with accurate values in \cite{DEKL}, for the case of the sphere and we will give in Corollary~\ref{Cor:Theta=1} a fully explicit statement on the admissible range for the parameter $\beta$, valid for more general manifolds. While Demange obtained a nonlinear improvement of the inequality, his constant in front of the leading order term can be improved as has been done in \cite{MR1338283,MR1412446,MR1631581} by taking~$\theta$ different from~$1$. The value of $\theta$ which is used throughout our paper is the one which optimizes the rigidity interval when $\beta$ varies.

This paper is organized as follows. In Section~\ref{Sec:Two}, we introduce two simple computations, which differ from the ones that can be found in \cite{MR1134481,MR1338283,MR1412446,MR1631581}: results are stated in Lemmata~\ref{BW1} and~\ref{BW2}. Section~\ref{Sec:Flow} is devoted to the study of the evolution of the functional $\mathcal F$ given by \eqref{functional} under the action of the flow defined by~\eqref{flow}. Proofs of Theorem~\ref{Thm:Main2} and Theorem~\ref{Thm:Inequality} are given in Section~\ref{Sec:Proofs}. Moreover, a simple inspection of these proofs allows to state an improved version of Theorem~\ref{Thm:Inequality}, with an integral remainder term. The question of the optimality of the range of rigidity and of the optimal constant in the interpolation inequality~\eqref{Ineq:Interp} will also be discussed in Section~\ref{Sec:Proofs}. The use of the flow~\eqref{flow} shows that our estimates are in general better than the ones known in the existing literature but still not optimal, although our approach relies only on global estimates (rather than on pointwise estimates). We stress the fact that global estimates allow us to consider the case of sign changing curvatures, an issue that has apparently not been considered up to now. Finally in Section~\ref{Sec:CNS} some extensions of the above results will be presented and some concluding remarks can be found in Section~\ref{Sec:Conclusion}.

\medskip Before dealing with our method, let us give a brief additional list of papers which are relevant for our approach. This is not an easy task because there are clearly several lines of thoughts which have developed in parallel. We will only mention the papers which are closely related to our purpose and select in the huge literature corresponding to the various topics listed below the earliest contributions we are aware of, or some recent papers to which we refer for more details on the historical developments. The motivation for this work comes from the method developed by B.~Gidas and J.~Spruck in \cite[Theorems B.1 and B.2]{MR615628} for proving \emph{rigidity} and later improved first by M.-F.~Bidaut-V\'eron and L.~V\'eron in \cite[Theorem 6.1]{MR1134481}, and then by J.R.~Licois and L.~V\'eron in \cite{MR1338283} and \cite[Theorem 2.1]{MR1631581}, and by D.~Bakry and M.~Ledoux in \cite[Inequality (1.11)]{MR1412446}. The choice of the exponents was somewhat mysterious in these papers while the choice of $\beta$ and $\kappa$ is rather straightforward and natural in our setting.

When $\M=\Sphere^d$, the \emph{rigidity} result with optimal range in terms of the parameter $\lambda$ was established in~\cite{MR1134481} and later, by other methods, in \cite{MR1230930}. The case of the critical exponent was known much earlier since the inequality corresponds to the Sobolev inequality on the Euclidean space, after a stereographic projection: we can refer to \cite[Corollaire 4]{MR0431287}, to \cite{MR0303464} for issues dealing with conformal invariance, to \cite{DEKL} for results on the sphere, and to \cite{DoEsLa2012} for consequences in the spectral theory of Schr\"odinger operators.

For general manifolds, the so-called $A$-$B$ problem, which amounts to determine the two best constants in interpolation inequalities, has attracted the attention of many authors and it is out of the scope of this paper to list all of them. Let us simply quote \cite{MR0448404,MR534672,MR608289,MR667236,hebey1994meilleures,MR2592965} as a selection of contributions on the ``second best constant'' problem. The interested reader can also refer to \cite{MR1688256} for a large overview of the topic. With some exceptions (see for instance \cite{MR1961176,MR2273884}), almost all related papers are concerned either with the case of critical exponents and related concentration issues or with the limiting case $p=2$, where Inequality~\eqref{Ineq:Interp} has to be replaced by the logarithmic Sobolev inequality (see \cite{federbush1969partially,Gross75,MR620582,MR674060} for early contributions to this topic).

Inequality~\eqref{Ineq:Interp} has been studied for $p\in(1,2)$ and various probability measures, mostly defined on the Euclidean space: we may refer for instance to \cite{MR772092,Bakry-Emery85,MR1796718,MR2081075,MR2273884,MR2609029}. The paper \cite{MR2273884} by P.~Deng and F.~Wang is of particular interest, as it gives estimates of the constant $\lambda$ in~\eqref{Ineq:Interp} for an arbitrary $p\in(1,2)$ using several methods, including in the case of compact manifolds and without assuming the positivity of the curvature. In the particular case of the sphere, the whole range $p\in(1,2^*]$ was covered in the case of the ultraspherical operator in \cite{MR1231419,MR2564058}. Also see \cite{DEKL} for the use of the ultraspherical operator in the context of \emph{rigidity} results.

Using flows on manifolds is a classical idea, including flows associated either to the heat equation or to porous media equations, or in relation with Ricci (see for instance \cite{MR2061425}) and Yamabe flows (see \cite[Appendix III]{MR2282669} for an introduction). Another line of thought based on flows refers to the \emph{carr\'e du champ} method: see \cite{MR772092,MR1307413,our-ls-2000,MR2213477} and \cite{MR1813804} for a quite detailed review by M.~Ledoux. In the framework of diffusion equations from the PDE point of view we primarily refer to \cite{AMTU} in the linear case, and to \cite{MR1777035,MR1940370,MR1986060,MR1853037} for fast diffusion equations in the Euclidean space. This approach was later interpreted in terms of gradient flows with respect to Wasserstein's distance in~\cite{MR1842429}, and then extended or reinterpreted in various contributions in the context of \emph{entropy methods}. Such a direction of research is out of the scope of the present paper, although it has been a strong source of inspiration. A very extended account can be found in \cite{MR2459454}. For completeness, let us mention that the flow defined by \eqref{flow} has been considered for instance in~\cite{DEKL} when $\beta=1$, or when $\kappa=0$ and either $p=2$ of $p=2^*$. When $\beta\neq1$, an equivalent form written for $\rho=u^{\beta\,p}$ is the main tool in \cite{Demange-PhD,MR2381156} (also see \cite{MR2487898}) and, as already mentioned above, one can refer to~\cite{MR2459454} for the critical case.

\section{Some preliminary computations}\label{Sec:Two}

If $\mathrm A_{i,j}$ and $\mathrm B_{i,j}$ are two tensors we use the notation
\[
\mathrm A\cdot \mathrm B:=g^{i,m}\,g^{j,n}\,\mathrm A_{i,j}\,\mathrm B_{m,n}\quad\mbox{and}\quad\|\mathrm A\|^2:=\mathrm A\cdot \mathrm A\;.
\]
Here $g^{i,j}$ is the inverse of the metric tensor, \emph{i.e.}, $g^{i,j}\,g_{j,k}=\delta^i_k$ where we used the Einstein summation convention and $\delta^i_k$ denotes the Kronecker symbol. Denote by $\mathrm H_g u$ the Hessian of $u$ and by
\[
\mathrm L_gu:=\mathrm H_gu-\frac gd\,\Delta_gu
\]
the trace free Hessian. The following lemma is well known, but since it is of some importance for what follows we give a short proof.
\begin{lem}\label{BW1} If $d\ge 2$ and $u\in C^2\,(\M)$, then we have
\[\label{modifiedBW}
\iM{(\Delta_gu)^2}=\frac d{d-1}\iM{\|\,\mathrm L_gu\,\|^2}+\frac d{d-1}\iM{{\Ric}(\nabla u,\nabla u)}\;.
\]
\end{lem}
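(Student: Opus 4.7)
The plan is to derive the identity from the classical Bochner--Lichnerowicz--Weitzenböck formula combined with the orthogonal decomposition of the Hessian into its trace and trace-free parts.

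First, I would recall the pointwise Bochner identity
\[
\tfrac12\,\Delta_g|\nabla u|^2=\|\mathrm H_gu\|^2+\nabla u\cdot\nabla(\Delta_gu)+\mathrm{Ric}(\nabla u,\nabla u)\,,
\]
valid for any $u\in C^3(\M)$ (and extended to $C^2$ by a standard approximation argument, or by quoting it as a well-known identity). Integrating over the compact manifold $\M$ without boundary kills the left-hand side, and one integration by parts converts $\int_\M\nabla u\cdot\nabla(\Delta_gu)\,d v_g$ into $-\int_\M(\Delta_gu)^2\,d v_g$. This produces the intermediate identity
\[
\iM{(\Delta_gu)^2}=\iM{\|\mathrm H_gu\|^2}+\iM{\mathrm{Ric}(\nabla u,\nabla u)}\,.
\]

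The second step is purely algebraic: decompose the Hessian as $\mathrm H_gu=\mathrm L_gu+\tfrac gd\,\Delta_gu$. Since $\mathrm L_gu$ is trace-free and $\|g\|^2=g^{i,m}g^{j,n}g_{i,j}g_{m,n}=\delta^i_m\delta^m_i=d$, the two pieces are orthogonal with respect to the inner product $\mathrm A\cdot\mathrm B$, giving the pointwise Pythagorean identity
\[
\|\mathrm H_gu\|^2=\|\mathrm L_gu\|^2+\tfrac1d\,(\Delta_gu)^2\,.
\]
Substituting this into the previous display, moving the $\tfrac1d\iM{(\Delta_gu)^2}$ term to the left, and multiplying by $\tfrac d{d-1}$ yields exactly the claimed identity.

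There is no real obstacle here: the only thing to be careful about is the orthogonality computation in $\|\mathrm H_gu\|^2=\|\mathrm L_gu\|^2+\frac1d(\Delta_gu)^2$, which requires correctly using $\|g\|^2=d$ and the vanishing of $g\cdot\mathrm L_gu=\mathrm{tr}(\mathrm L_gu)=0$. Everything else is standard integration by parts on a closed manifold. One could equivalently start from the well-known formula $\iM{\|\mathrm H_gu\|^2}+\iM{\mathrm{Ric}(\nabla u,\nabla u)}=\iM{(\Delta_gu)^2}$ as a black box, but writing out the Bochner step makes the proof self-contained and naturally motivates why the trace-free Hessian $\mathrm L_gu$ is the relevant object in the later flow computations of Section~\ref{Sec:Flow}.
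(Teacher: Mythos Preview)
Your proof is correct and follows essentially the same route as the paper: integrate the Bochner--Lichnerowicz--Weitzenb\"ock formula to obtain $\iM{(\Delta_gu)^2}=\iM{\|\mathrm H_gu\|^2}+\iM{\Ric(\nabla u,\nabla u)}$, then use the trace-free decomposition $\mathrm H_gu=\mathrm L_gu+\tfrac gd\,\Delta_gu$ together with $[\mathrm L_gu]\cdot g=0$ to get $\|\mathrm H_gu\|^2=\|\mathrm L_gu\|^2+\tfrac1d(\Delta_gu)^2$ and rearrange. Your presentation is slightly more explicit about the orthogonality computation (spelling out $\|g\|^2=d$), but there is no substantive difference.
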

\begin{proof} Integrating the Bochner-Lichnerowicz-Weitzenb\"ock formula
\[
\frac12\,\Delta\,|\nabla u|^2=\|\mathrm H_gu\|^2+\nabla (\Delta_gu)\cdot\nabla u+\Ric(\nabla u,\nabla u)
\]
yields
\[
\iM{(\Delta_gu)^2}=\iM{\|\mathrm H_gu\|^2}+\iM{\Ric(\nabla u,\nabla u)}\;,
\]
where, incidentally, we may remark that the right-hand side is nothing else than the integral of $\Gamma_2(u,u)$ in the \emph{carr\'e du champ} method: see for instance \cite{MR1412446}. Next we notice that
\[
[\mathrm L_gu]\cdot g=0\;,
\]
because $\Delta_gu$ is the trace of $\mathrm H_gu$, which implies
\[
\iM{\|\,\mathrm L_gu\,\|^2}=\iM{\|\mathrm H_gu\|^2}-\frac1d\iM{(\Delta_gu)^2}\;,
\]
{}from which Lemma~\ref{BW1} follows.\end{proof}
The next lemma will be useful to express some important quantities as a square. Here we deal with the cross term.
\begin{lem}\label{BW2} If $u\in C^2\,(\M)$ is a positive function, then we have
\[\label{aux}
\iM{\Delta_gu\,\frac{|\nabla u|^2}u}=\frac d{d+2}\iM{\frac{|\nabla u|^4}{u^2}}-\frac{2\,d}{d+2}\iM{[\mathrm L_gu]\cdot\left[\frac{\nabla u\otimes\nabla u}u\right]}\;.
\]
\end{lem}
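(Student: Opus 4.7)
The plan is to prove Lemma~\ref{BW2} by a single integration by parts followed by a decomposition of the Hessian into its trace and traceless parts. After moving the Laplacian off of $\Delta_g u$, the term $\mathrm{H}_g u$ will reappear, and the point is that $\Delta_g u\,|\nabla u|^2/u$ shows up again on the right-hand side and can be absorbed to yield the stated coefficient $d/(d+2)$.

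First I would integrate by parts:
\[
\iM{\Delta_gu\,\frac{|\nabla u|^2}u}=-\iM{\nabla u\cdot\nabla\!\left(\frac{|\nabla u|^2}u\right)}
=\iM{\frac{|\nabla u|^4}{u^2}}-\iM{\frac{\nabla u\cdot\nabla(|\nabla u|^2)}{u}}\;.
\]
Next I would use the standard pointwise identity $\nabla u\cdot\nabla(|\nabla u|^2)=2\,\mathrm{H}_gu\cdot(\nabla u\otimes\nabla u)$, which follows from writing $|\nabla u|^2=g^{mn}(\nabla u)_m(\nabla u)_n$ and differentiating (the metric being parallel). Thus
\[
\iM{\Delta_gu\,\frac{|\nabla u|^2}u}=\iM{\frac{|\nabla u|^4}{u^2}}-2\iM{\frac{\mathrm{H}_gu\cdot(\nabla u\otimes\nabla u)}{u}}\;.
\]

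Now I would split $\mathrm{H}_gu=\mathrm{L}_gu+\tfrac{g}{d}\Delta_gu$. Since $g\cdot(\nabla u\otimes\nabla u)=g^{im}g^{jn}g_{ij}(\nabla u)_m(\nabla u)_n=|\nabla u|^2$, this gives the pointwise identity
\[
\mathrm{H}_gu\cdot(\nabla u\otimes\nabla u)=\mathrm{L}_gu\cdot(\nabla u\otimes\nabla u)+\frac{1}{d}\,\Delta_gu\,|\nabla u|^2\;.
\]
Plugging back in and grouping the two occurrences of $\iM{\Delta_gu\,|\nabla u|^2/u}$ on the same side produces
\[
\left(1+\frac{2}{d}\right)\iM{\Delta_gu\,\frac{|\nabla u|^2}u}=\iM{\frac{|\nabla u|^4}{u^2}}-2\iM{[\mathrm{L}_gu]\cdot\!\left[\frac{\nabla u\otimes\nabla u}{u}\right]}\;,
\]
which rearranges into the claimed identity after multiplying by $d/(d+2)$.

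There is no real obstacle here; the only care needed is in the tensor bookkeeping, in particular verifying $g\cdot(\nabla u\otimes\nabla u)=|\nabla u|^2$ under the convention $\mathrm{A}\cdot\mathrm{B}=g^{im}g^{jn}\mathrm{A}_{ij}\mathrm{B}_{mn}$, and keeping the factor of $2$ from differentiating the squared gradient. Positivity of $u$ is used only to ensure the integrands make sense; no hypothesis on curvature or on the sign of $\Delta_gu$ is needed.
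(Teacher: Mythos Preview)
Your proof is correct and follows essentially the same approach as the paper's own proof: an integration by parts producing the Hessian contraction, followed by the decomposition $\mathrm H_gu=\mathrm L_gu+\tfrac{g}{d}\,\Delta_gu$ and reabsorption of the resulting $\iM{\Delta_gu\,|\nabla u|^2/u}$ term. The only difference is that you spell out the intermediate identities (the quotient rule and $\nabla u\cdot\nabla(|\nabla u|^2)=2\,\mathrm H_gu\cdot(\nabla u\otimes\nabla u)$) explicitly, whereas the paper compresses this into a single sentence.
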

\begin{proof} An integration by parts shows that
\[
\iM{\Delta_gu\,\frac{|\nabla u|^2}u}=\iM{\frac{|\nabla u|^4}{u^2}}-2\iM{[\mathrm H_g u]\cdot \left[\frac{\nabla u \otimes \nabla u}u\right]}\,,
\]
which equals
\[
\iM{\frac{|\nabla u|^4}{u^2}}-2\iM{[\mathrm L_gu] \cdot \left[\frac{\nabla u \otimes \nabla u}u\right]}
-\frac2d\iM{\Delta_gu\,\frac{|\nabla u|^2}u}\;,
\]
by definition of $\mathrm L_gu$. Lemma~\ref{BW2} follows.\end{proof}

Recall that $(\M,g)$ is a smooth compact connected Riemannian manifold without boundary and consider the Poincar\'e inequality
\be{Ineq:Poincare}
\iM{|\nabla u|^2}\ge\lambda_1\iM{|u-\overline u|^2}\quad\forall\,u\in\H^1(\M)\quad\mbox{such that}\quad\overline u=\iM u\;,
\ee
with optimal constant $\lambda_1>0$. By standard compactness results, it is known that $\lambda_1$ can be characterized as the minimum of $\iM{|\nabla u|^2}/\iM{|u-\overline u|^2}$ on all non-constant functions $u\in\H^1(\M)$ and is achieved by a function $\varphi\in\H^1(\M)$ such that $\nrM\varphi2=1$ and $\overline\varphi=0$.

The third lemma of this section is a standard result, that is nonetheless crucial in our approach. For completeness, let us give a statement and a short proof.
\begin{lem}\label{Lem:FirstEigenvalue} With the above notations, we have
\[
\iM{(\Delta_gu)^2}\ge\lambda_1\iM{|\nabla u|^2}\quad\forall\,u\in\H^2(\M)\;.
\]
Moreover, $\lambda_1$ is the optimal constant in the above inequality.
\end{lem}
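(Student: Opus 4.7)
The plan is to reduce to the mean-zero case and then combine integration by parts, Cauchy--Schwarz, and the Poincar\'e inequality~\eqref{Ineq:Poincare}.

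First I would observe that both sides of the inequality are invariant under adding a constant to $u$: $\nabla(u-\overline u)=\nabla u$ and $\Delta_g(u-\overline u)=\Delta_g u$. So, setting $w:=u-\overline u$, it suffices to prove the inequality for $w$, which satisfies $\overline w=0$. Integrating by parts on the closed manifold gives
\[
\iM{|\nabla w|^2}=-\iM{w\,\Delta_g w}\;,
\]
and Cauchy--Schwarz yields $\iM{|\nabla w|^2}\le\nrM{w}2\,\nrM{\Delta_g w}2$. Since $\overline w=0$, Poincar\'e's inequality~\eqref{Ineq:Poincare} gives $\nrM{w}2\le\lambda_1^{-1/2}\,\nrM{\nabla w}2$. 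Plugging this in,
\[
\nrM{\nabla w}2^2\le\lambda_1^{-1/2}\,\nrM{\nabla w}2\,\nrM{\Delta_g w}2\;,
\]
and dividing by $\nrM{\nabla w}2$ (the case $\nabla w\equiv0$ being trivial) then squaring produces $\lambda_1\,\iM{|\nabla w|^2}\le\iM{(\Delta_g w)^2}$, which is the desired inequality.

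For optimality, I would take as test function an eigenfunction $\varphi$ associated with $\lambda_1$, i.e., $-\Delta_g\varphi=\lambda_1\,\varphi$ with $\overline\varphi=0$ (such a $\varphi$ exists by the standard compactness argument mentioned in the excerpt just before the lemma). Then $\iM{(\Delta_g\varphi)^2}=\lambda_1^2\iM{\varphi^2}$ while $\iM{|\nabla\varphi|^2}=-\iM{\varphi\,\Delta_g\varphi}=\lambda_1\iM{\varphi^2}$, so equality holds and $\lambda_1$ is indeed the best constant.

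There is no real obstacle here; the only point to be careful about is remembering to normalize $u$ by subtracting its mean before invoking~\eqref{Ineq:Poincare}, and to handle the trivial case $\nabla u\equiv 0$ (where $u$ is constant on the connected manifold $\M$ and both sides vanish) separately before dividing.
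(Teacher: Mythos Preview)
Your proof is correct and follows essentially the same approach as the paper: integration by parts, Cauchy--Schwarz, and the Poincar\'e inequality, with optimality tested on a first eigenfunction. The only cosmetic difference is that you subtract the mean at the outset and work with $w=u-\overline u$, whereas the paper keeps $u$ and writes $u-\overline u$ inline inside the Cauchy--Schwarz step.
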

\begin{proof} A simple computation based on the Cauchy-Schwarz inequality
\[
{\(\iM{|\nabla u|^2}\)}^2={\(\iM{\Lap u\;(u-\overline u)}\)}^2\le\iM{(\Lap u)^2}\iM{|u-\overline u|^2}
\]
shows that
\[
\iM{(\Lap u)^2}\ge\frac{\(\iM{|\nabla u|^2}\)^2}{\iM{|u-\overline u|^2}}
\]
and one observes using \eqref{Ineq:Poincare} that
\[
\frac{\(\iM{|\nabla u|^2}\)^2}{\iM{|u-\overline u|^2}}\ge\lambda_1\iM{|\nabla u|^2}\;,
\]
which concludes the proof. Notice that optimality in the Cauchy-Schwarz inequality means that $\Lap u$ and $(u-\overline u)$ are collinear, that is, $(u-\overline u)$ is an eigenfunction. It is then clear that equality holds if $u=\varphi$, an eigenfunction associated with the first positive eigenvalue $\lambda_1$.
\end{proof}

As a consequence of Lemmata~\ref{BW1} and~\ref{Lem:FirstEigenvalue}, we find that
\[
\lambda_1=\inf\frac{\iM{(\Delta_gu)^2}}{\iM{|\nabla u|^2}}\ge\frac d{d-1}\inf\frac{\iM{{\Ric}(\nabla u,\nabla u)}}{\iM{|\nabla u|^2}}\;,
\]
thus establishing the Lichnerowicz estimate: $\lambda_1\ge\frac{d\,\rho}{d-1}$ (see \cite[p.~135]{MR0124009} for the original statement, and \cite[p.~Ê179]{MR0282313} for the Obata-Lichnerowicz theorem).

\section{A porous media flow}\label{Sec:Flow}

This section is devoted to the study of the functional defined in \eqref{functional} and its evolution under the action of the flow defined by \eqref{flow}, which is an equation of \emph{porous media} type (the exponent $\beta$ is in some cases smaller than $1$ and one should then speak of \emph{fast diffusion}: see discussion at the end of this section). We will expose the computations without taking care of regularity issues that are requested to justify integrations by parts and give afterwards a sketch of the proofs and regularizations that are eventually needed. The first statement explains how the coefficient $\kappa$ is chosen.
\begin{lem}\label{Lem:Lp} With $\kappa$ given by \eqref{Eqn:kappa}, the functional $u\mapsto\iM{u^{\beta\,p}}$ remains constant if $u$ is a smooth positive solution of~\eqref{flow}.\end{lem}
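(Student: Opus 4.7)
The plan is to differentiate the $\L^{\beta p}$-norm (to the power $\beta p$) along the flow, substitute the evolution equation, and show that after one integration by parts the two resulting terms cancel exactly because of the algebraic identity $\kappa = 1+\beta(p-2)$.

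Concretely, I would first write
\[
\frac{d}{dt}\iM{u^{\beta p}}=\beta p\iM{u^{\beta p-1}\,u_t}=\beta p\iM{u^{\beta p-1}\,u^{2-2\beta}\Bigl(\Delta_g u+\kappa\,\frac{|\nabla u|^2}{u}\Bigr)}.
\]
The exponent of $u$ multiplying the parenthesis is $\beta p-1+2-2\beta=1+\beta(p-2)=\kappa$, so the integrand becomes $u^{\kappa}\,\Delta_g u+\kappa\,u^{\kappa-1}|\nabla u|^{2}$. This is where the choice of $\kappa$ in~\eqref{Eqn:kappa} shows its role: it is exactly the value that makes the diffusion term of the flow combine with the drift term in a way compatible with the natural exponent produced by the $\L^{\beta p}$-norm.

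Next I would integrate by parts the Laplacian term on the compact manifold without boundary:
\[
\iM{u^{\kappa}\,\Delta_g u}=-\iM{\nabla(u^{\kappa})\cdot\nabla u}=-\kappa\iM{u^{\kappa-1}|\nabla u|^{2}}.
\]
Adding this to $\kappa\iM{u^{\kappa-1}|\nabla u|^{2}}$ gives zero, so $\frac{d}{dt}\iM{u^{\beta p}}=0$, which is the claim.

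The step-by-step computation is elementary; the only real subtlety is justifying the differentiation under the integral and the integration by parts. For smooth, strictly positive solutions of~\eqref{flow} on the compact manifold $\M$, both operations are routine, so the \emph{main} obstacle is not analytic but simply the bookkeeping of exponents: one must check that $\beta p-1+(2-2\beta)$ does collapse to $\kappa$, which is precisely the content of the defining relation $\kappa=1+\beta(p-2)$. As noted by the authors, regularity and positivity of $u$ along the flow (needed to make sense of the fractional powers and of the integrations by parts) are to be treated separately; at this stage one may assume a smooth positive solution, with the general case recovered by the standard approximation arguments promised just before the lemma.
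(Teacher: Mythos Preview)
Your proof is correct and follows exactly the same approach as the paper's proof, just with the details made explicit: the paper compresses the computation to the single line $\frac d{dt}\iM{u^{\beta p}}=\beta p\iM{u^{1+\beta(p-2)}\bigl(\Delta_gu+\kappa\,\frac{|\nabla u|^2}u\bigr)}=0$ ``by the choice of $\kappa$,'' while you spell out the exponent bookkeeping $\beta p-1+2-2\beta=\kappa$ and the integration by parts that produces the cancellation.
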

\begin{proof} Computing the time derivative, we get that
\[
\frac d{dt}\iM{u^{\beta\,p}}=\beta\,p\iM{u^{1+\beta\,(p-2)}\(\Delta_gu+\kappa\,\frac{|\nabla u|^2}u\)}=0
\]
by the choice of $\kappa$.\end{proof}

Next we compute the time derivative of the remaining terms in the functional \eqref{functional}.
\begin{lem}\label{Lem:F} If if $u$ is a smooth positive solution of~\eqref{flow}, the following identity holds
\[
\frac1{2\,\beta^2}\,\frac d{dt}\mathcal F[u]=-\iM{\left[ (\Delta_gu)^2+(\kappa+\beta-1)\,\Delta_gu\,\frac{|\nabla u|^2}u+\kappa\,(\beta-1)\,\frac{|\nabla u|^4}{u^2}\right]}+\lambda\iM{|\nabla u|^2}\;.
\]
\end{lem}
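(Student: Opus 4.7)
The plan is to compute $\frac{d}{dt}\mathcal F[u]$ term by term, using Lemma~\ref{Lem:Lp} to kill the contribution of the $L^{\beta p}$--norm and then rewriting the remaining contributions in terms of $u$ rather than $v=u^\beta$.

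First, by Lemma~\ref{Lem:Lp}, the quantity $\iM{u^{\beta p}}$ is conserved along~\eqref{flow}, so the term $-\frac{\lambda}{p-2}\bigl(\iM{u^{\beta p}}\bigr)^{2/p}$ in $\mathcal F[u]$ has zero time derivative and can be ignored. Next I would deal with $\iM{u^{2\beta}}$: using the flow,
\[
\tfrac{d}{dt}\iM{u^{2\beta}}=2\beta\iM{u^{2\beta-1}u_t}=2\beta\iM{u\,\Delta_g u+\kappa\,|\nabla u|^2}=2\beta(\kappa-1)\iM{|\nabla u|^2},
\]
after one integration by parts. Since $\kappa-1=\beta(p-2)$, this equals $2\beta^2(p-2)\iM{|\nabla u|^2}$, so $\frac{\lambda}{p-2}\frac{d}{dt}\iM{u^{2\beta}}=2\beta^2\lambda\iM{|\nabla u|^2}$, yielding the $+\lambda\iM{|\nabla u|^2}$ term after dividing by $2\beta^2$.

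For the gradient term, set $v=u^\beta$. Then
\[
\tfrac{d}{dt}\iM{|\nabla v|^2}=2\iM{\nabla v\cdot\nabla v_t}=-2\iM{\Delta_g v\cdot v_t},
\]
after integration by parts (no boundary term, since $\M$ has no boundary). The key algebraic observation is that both factors can be written as linear combinations of $\Delta_g u$ and $|\nabla u|^2/u$: indeed
\[
\Delta_g v=\beta u^{\beta-1}\Bigl[\Delta_g u+(\beta-1)\tfrac{|\nabla u|^2}{u}\Bigr],\qquad v_t=\beta u^{1-\beta}\Bigl[\Delta_g u+\kappa\,\tfrac{|\nabla u|^2}{u}\Bigr],
\]
where the powers of $u$ cancel thanks to the factor $u^{2-2\beta}$ in~\eqref{flow}. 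Multiplying out,
\[
\Delta_g v\cdot v_t=\beta^2\Bigl[(\Delta_g u)^2+(\kappa+\beta-1)\,\Delta_g u\,\tfrac{|\nabla u|^2}{u}+\kappa(\beta-1)\,\tfrac{|\nabla u|^4}{u^2}\Bigr].
\]
Integrating over $\M$ and combining with the previous step gives exactly the stated identity.

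The computation is essentially bookkeeping; the only nontrivial point is the small miracle that the $u^{2-2\beta}$ factor in the flow is precisely what is needed so that $\Delta_g v\cdot v_t$ reduces to a quadratic form in $\Delta_g u$ and $|\nabla u|^2/u$ with no leftover powers of $u$. I would not expect any serious obstacle; the only issue to mention would be the regularity needed to justify the integrations by parts (positivity and $C^2$--smoothness of $u$), which the authors presumably handle by an approximation argument as announced in the paragraph preceding Lemma~\ref{Lem:Lp}.
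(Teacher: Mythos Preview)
Your proof is correct and follows essentially the same approach as the paper: use Lemma~\ref{Lem:Lp} to discard the $\L^{\beta p}$ contribution, compute $\frac d{dt}\iM{u^{2\beta}}$ by one integration by parts, and compute $\frac d{dt}\iM{|\nabla(u^\beta)|^2}$ by writing both $\Delta_g(u^\beta)$ and $(u^\beta)_t$ as $u^{\pm(\beta-1)}$ times a linear combination of $\Delta_g u$ and $|\nabla u|^2/u$, so that the powers of $u$ cancel. The paper's proof is line-for-line the same computation, only presented slightly more tersely.
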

\begin{proof} By Lemma~\ref{Lem:Lp}, we only have to compute $\frac d{dt}\iM{|\nabla(u^\beta)|^2}$ and $\frac d{dt}\iM{u^{2\,\beta}}$. We get
\begin{eqnarray*}
\frac12\,\frac d{dt}\iM{|\nabla(u^\beta)|^2}&=&-\,\beta\iM{\Delta_g(u^\beta)\,u^{1-\beta}\,\(\Delta_gu+\kappa\,\frac{|\nabla u|^2}u\)}\\
&=&-\,\beta^2\iM{\(\Delta_gu+(\beta-1)\,\frac{|\nabla u|^2}u\)\(\Delta_gu+\kappa\,\frac{|\nabla u|^2}u\)}\\
&=&-\,\beta^2\iM{\left[(\Delta_gu)^2+(\kappa+\beta-1)\,\Delta_gu\,\frac{|\nabla u|^2}u+\kappa\,(\beta-1)\,\frac{|\nabla u|^4}{u^2}\right]}
\end{eqnarray*}
and
\begin{eqnarray}\nonumber
\frac12\,\frac d{dt}\iM{u^{2\,\beta}}&=&\beta\iM{u^{2\,\beta-1}\,u^{2-2\,\beta}\,\(\Delta_gu+\kappa\,\frac{|\nabla u|^2}u\)}\\
\label{Eqn:L2}&=&\beta\,(\kappa-1)\iM{|\nabla u|^2}\\
&=&\beta^2\,(p-2)\iM{|\nabla u|^2}\;,\nonumber
\end{eqnarray}
which completes the proof.\end{proof}

Notice that for any $\theta\in(0,1]$ we can write the result of Lemma~\ref{Lem:F} as
\[
\frac1{2\,\beta^2}\,\frac d{dt}\mathcal F[u]=-\,(1-\theta)\iM{(\Delta_gu)^2}-\mathcal G[u]+\lambda\iM{|\nabla u|^2}\;,
\]
where
\[
\mathcal G[u]:=\iM{\left[\theta\,(\Delta_gu)^2+(\kappa+\beta-1)\,\Delta_gu\,\frac{|\nabla u|^2}u+\kappa\,(\beta-1)\,\frac{|\nabla u|^4}{u^2}\right]}\;.
\]
We shall now introduce a key quantity for our method. For any positive function $u\in C^2(\M)$ set
\[
\mathrm Q_g^\theta u:=\mathrm L_gu-\frac 1\theta\,\frac{d-1}{d+2}\,(\kappa+\beta-1)\left[\frac{\nabla u\otimes\nabla u}u-\frac gd\,\frac{|\nabla u|^2}u\right]\,.
\]
Notice that $\mathrm Q_g^\theta$ is a matrix valued, traceless quantity. The exponent $\beta$ in the flow~\eqref{flow} and the value of $\theta\in(0,1]$ will be chosen below in this section.
\begin{lem}\label{Lem:G} Assume that $d\ge 2$. With the above notations, any positive function $u\in C^2(\M)$ satisfies the identity
\[
\mathcal G[u]=\frac{\theta\,d}{d-1}\left[\iM{\|\mathrm Q_g^\theta u\|^2}+\iM{{\Ric}(\nabla u,\nabla u)}\right]-\,\mu\iM{\frac{|\nabla u|^4}{u^2}}
\]
with $\mu:=\frac1\theta\big(\frac{d-1}{d+2}\big)^2(\kappa+\beta-1)^2-\kappa\,(\beta-1)-(\kappa+\beta-1)\,\frac d{d+2}$.\end{lem}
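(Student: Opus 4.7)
The plan is a direct algebraic manipulation that combines the two integral identities of Lemmata~\ref{BW1} and~\ref{BW2} with an expansion of the perfect square defining $\mathrm Q_g^\theta u$. No further analytic ingredient should be required: everything is pointwise/integral identities on $\M$.

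First I would set, for brevity, $c:=\tfrac1\theta\,\tfrac{d-1}{d+2}\,(\kappa+\beta-1)$, so that $\mathrm Q_g^\theta u=\mathrm L_gu-c\,\bigl[\tfrac{\nabla u\otimes\nabla u}u-\tfrac gd\,\tfrac{|\nabla u|^2}u\bigr]$. Expanding the square and using $[\mathrm L_gu]\cdot g=0$ (the trace‑free property recalled in the proof of Lemma~\ref{BW1}), the cross term reduces to $-2c\,[\mathrm L_gu]\cdot\tfrac{\nabla u\otimes\nabla u}u$. For the quadratic term I would compute, using $\|g\|^2=d$ and $(\nabla u\otimes\nabla u)\cdot g=|\nabla u|^2$, that
\[
\Bigl\|\tfrac{\nabla u\otimes\nabla u}u-\tfrac gd\,\tfrac{|\nabla u|^2}u\Bigr\|^2=\tfrac{d-1}d\,\tfrac{|\nabla u|^4}{u^2}\;.
\]
Integrating then yields
\[
\iM{\|\mathrm Q_g^\theta u\|^2}=\iM{\|\mathrm L_gu\|^2}-2c\iM{[\mathrm L_gu]\cdot\tfrac{\nabla u\otimes\nabla u}u}+c^2\,\tfrac{d-1}d\iM{\tfrac{|\nabla u|^4}{u^2}}\;.
\]

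Next I would replace the two remaining integrals by their expressions in terms of $(\Delta_gu)^2$, $\Ric(\nabla u,\nabla u)$, $\Delta_gu\,|\nabla u|^2/u$, and $|\nabla u|^4/u^2$: Lemma~\ref{BW1} gives
$\iM{\|\mathrm L_gu\|^2}=\tfrac{d-1}d\iM{(\Delta_gu)^2}-\iM{\Ric(\nabla u,\nabla u)}$, and Lemma~\ref{BW2} rearranged gives
\[
\iM{[\mathrm L_gu]\cdot\tfrac{\nabla u\otimes\nabla u}u}=\tfrac12\iM{\tfrac{|\nabla u|^4}{u^2}}-\tfrac{d+2}{2d}\iM{\Delta_gu\,\tfrac{|\nabla u|^2}u}\;.
\]
Multiplying $\iM{\|\mathrm Q_g^\theta u\|^2}$ by $\tfrac{\theta d}{d-1}$ and substituting, the choice $c=\tfrac1\theta\,\tfrac{d-1}{d+2}\,(\kappa+\beta-1)$ is exactly what is needed so that the coefficient of $\iM{\Delta_gu\,|\nabla u|^2/u}$ in $\tfrac{\theta d}{d-1}\iM{\|\mathrm Q_g^\theta u\|^2}$ becomes $\tfrac{\theta c(d+2)}{d-1}=\kappa+\beta-1$, matching the cross term in $\mathcal G[u]$; similarly the coefficient of $\iM{(\Delta_gu)^2}$ becomes $\theta$, matching the leading term in $\mathcal G[u]$.

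Finally I would collect the remaining coefficients. The $\Ric$ term appears with the correct sign $-\tfrac{\theta d}{d-1}$ after one moves it to the left‑hand side of the identity, and the coefficient of $\iM{|\nabla u|^4/u^2|}$ in $\mathcal G[u]-\tfrac{\theta d}{d-1}\bigl[\iM{\|\mathrm Q_g^\theta u\|^2}+\iM{\Ric(\nabla u,\nabla u)}\bigr]$ becomes
\[
\kappa\,(\beta-1)+(\kappa+\beta-1)\,\tfrac d{d+2}-\tfrac1\theta\bigl(\tfrac{d-1}{d+2}\bigr)^2(\kappa+\beta-1)^2=-\mu\;,
\]
which is precisely the claimed value of $\mu$. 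No step is a real obstacle: the only thing to watch is the bookkeeping of the two cancellations that force the coefficients of $(\Delta_gu)^2$ and $\Delta_gu\,|\nabla u|^2/u$ to match those in $\mathcal G[u]$, which is exactly the reason $c$ (equivalently, the normalization in $\mathrm Q_g^\theta u$) is defined the way it is.
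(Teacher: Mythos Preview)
Your proposal is correct and is essentially the paper's own argument run in reverse: the paper starts from $\mathcal G[u]$, inserts Lemmata~\ref{BW1} and~\ref{BW2}, and then completes the square to produce $\|\mathrm Q_g^\theta u\|^2$, whereas you expand $\|\mathrm Q_g^\theta u\|^2$ first and substitute the same two lemmata to recover the three terms of $\mathcal G[u]$. The ingredients (trace-freeness of $\mathrm L_gu$, the identity $\|\tfrac{\nabla u\otimes\nabla u}{u}-\tfrac gd\tfrac{|\nabla u|^2}{u}\|^2=\tfrac{d-1}d\tfrac{|\nabla u|^4}{u^2}$, and the two integral lemmata) and the resulting coefficient bookkeeping are identical.
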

\begin{proof} By Lemma \ref{BW1} and \ref{BW2}, we know that
\begin{multline*}\hspace*{-10pt}
\mathcal G[u]=\frac{\theta\,d}{d-1}\left[\iM{\|\,\mathrm L_gu\,\|^2}+\iM{{\Ric}(\nabla u,\nabla u)}\right]\\
+(\kappa+\beta-1)\left[\frac d{d+2}\iM{\frac{|\nabla u|^4}{u^2}}
-\frac{2\,d}{d+2}\iM{[\mathrm L_gu]\cdot\left[\frac{\nabla u\otimes\nabla u}u\right]}\right]+\kappa\,(\beta-1)\iM{\frac{|\nabla u|^4}{u^2}}
\end{multline*}
which can be arranged into
\begin{multline*}
\mathcal G[u]=\frac{\theta\,d}{d-1}\left[\iM{\|\,\mathrm L_gu\,\|^2}-\frac2\theta\,\frac{d-1}{d+2}\,(\kappa+\beta-1)\iM{[\mathrm L_gu]\cdot\left[\frac{\nabla u\otimes\nabla u}u\right]}
\right]\\
+\(\kappa\,(\beta-1)+(\kappa+\beta-1)\,\frac d{d+2}\)\iM{\frac{|\nabla u|^4}{u^2}}+\frac{\theta\,d}{d-1}\iM{{\Ric}(\nabla u,\nabla u)}\;.
\end{multline*}
Noting again that $[\mathrm L_gu]\cdot g=0$, we can rewrite the first line as
\[
\frac{\theta\,d}{d-1}\left[\iM{\|\,\mathrm L_gu\,\|^2}-\frac2\theta\,\frac{d-1}{d+2}\,(\kappa+\beta-1)\iM{[\mathrm L_gu]\cdot\left[\frac{\nabla u\otimes\nabla u}u-\frac gd\,\frac{|\nabla u|^2}u\right]}\right]\,,
\]
which, by completing the square, turns into
\[
\frac{\theta\,d}{d-1}\iM{\|\mathrm Q_g^\theta u\|^2}-\frac1\theta\(\frac{d-1}{d+2}\)^2(\kappa+\beta-1)^2\iM{\frac{|\nabla u|^4}{u^2}}
\]
because
\[
\iM{\left\|\frac{\nabla u\otimes\nabla u}u-\frac gd\,\frac{|\nabla u|^2}u\right\|^2}=\(1-\frac 1d\)\iM{\frac{|\nabla u|^4}{u^2}}\;.
\]
This completes the proof of Lemma~\ref{Lem:G}.
\end{proof}

At this point let us mention that $\theta$ can be chosen equal to $1$. This was the strategy \cite{Demange-PhD,MR2381156,MR2459454}. This leaves some flexibility on the choice of $\beta$, as it is mentioned in \cite[Section~3.12]{Demange-PhD}. If $1\le d\le4$, a detailed discussion is needed. If $d\ge 5$, we have for instance the following result.
\begin{lem}\label{Lem:beta} Assume that $d\ge 5$. If $\theta=1$, then $\mu$ as defined in Lemma~\ref{Lem:G} is non-positive if $\beta$ is such that
\[
\beta_-(p)\le\beta\le\beta_+(p)\quad\forall\,p\in(1,2^*)\,,
\]
where $\beta_\pm:=\frac{\mathsf b\pm\sqrt{\mathsf b^2-\mathsf a}}{2\,\mathsf a}$ with $\mathsf a=2-p+\left[\frac{(d-1)\,(p-1)}{d+2}\right]^2$ and $\mathsf b=\frac{d+3-p}{d+2}$.\end{lem}
\begin{proof} Elementary computations show that $\mathsf a$ is positive, $\mathsf b^2-\mathsf a$ is positive for any $p\in(1,2^*)$ and so $\mu(\beta)=\mathsf a\,\beta^2-2\,\mathsf b\,\beta+1$ is negative if $\beta\in(\beta_-(p),\beta_+(p))$, where $\beta_\pm$ are the two roots of the equation $\mu(\beta)=0$. Notice that for $d\le4$, $\mathsf a=0$ or becomes negative for some values of $p$. \end{proof}

See Corollary~\ref{Cor:Theta=1} for some consequences of Lemma~\ref{Lem:beta}. Instead of choosing $\theta=1$, we can take it as small as possible in order that, for some well chosen $\beta$, $\mu$ is still non-positive. The optimal choice of $\mu$ is such that the critical level of the parabola $\beta\mapsto\mu(\beta)$ is exactly $0$. For this value of $\beta$, we can minimize $\theta$ among all possible choices that make $\mu$ non-positive and $\beta$ is determined uniquely, independently of the sign of the term $\beta^2$ in the expression of $\mu$. Let us give details. Up to now, we have shown that
\begin{multline*}
\frac1{2\,\beta^2}\,\frac d{dt}\mathcal F[u]=-\,(1-\theta)\iM{(\Delta_gu)^2}-\frac{\theta\,d}{d-1}\left[\iM{\|\mathrm Q_g^\theta u\|^2}+\iM{{\Ric}(\nabla u,\nabla u)}\right]\\
+\,\mu\iM{\frac{|\nabla u|^4}{u^2}}+\,\lambda\iM{|\nabla u|^2}
\end{multline*}
where, after taking into account \eqref{Eqn:kappa}, $\mu$ is given by
\begin{multline*}
\mu=\frac1\theta\(\frac{d-1}{d+2}\)^2\beta^2\,(p-1)^2-\big(1+\beta\,(p-2)\big)\,(\beta-1)-\beta\,(p-1)\,\frac d{d+2}\\
=\(\frac1\theta\(\frac{d-1}{d+2}\)^2(p-1)^2-(p-2)\)\beta^2-2\,\frac{d+3-p}{d+2}\,\beta+1\;.
\end{multline*}
The goal is to find values of $\beta$ for which~$\mu$ is equal to zero, thus maximizing the other terms. Unless $\theta\,(p-2)=(p-1)^2\,(d-1)^2/(d+2)^2$, the coefficient $\mu$ is quadratic in terms of $\beta$, and we can choose the extremum of $\mu$ with respect to $\beta$, \emph{i.e.},
\[
\beta=\frac{(d+2)\,(d+3-p)\,\theta}{(d-1)^2\,(p-1)^2-(d+2)^2\,(p-2)\,\theta}\;.
\]
With this specific value of $\beta$, we adjust the value of $\theta$ so that $\mu(\beta)=0$, if $p\neq d+3$. Because we assume that $p<\frac{2\,d}{d-2}$ if $d\ge3$, $p=d+3$ cannot occur unless $d=2$ and $p=5$. All computations done, we find that
\be{Eqn:Choice}
\theta=\frac{(d-1)^2\,(p-1)}{d\,(d+2)+p-1}\quad\mbox{and}\quad\beta=\frac{d+2}{d+3-p}
\ee
if $(p,d)\neq(5,2)$ and one can easily check that with these values, and without condition on $\theta$, we actually have $\mu=0$. When $p$ varies in the interval $(1,2^*)$, $\theta=\theta(p)$ ranges from $\theta(1)=0$ to $\lim_{p\to2^*}\theta(p)=1$ if $d\ge2$ and $(p,d)\neq(5,2)$. It is left to the reader to check that the expression of $\mathrm Q_g$ given in Section~\ref{Sec:Intro} coincides with $\mathrm Q_g^\theta$ for the above choices of $\theta$ and $\beta$. Notice that with such a choice of $\theta$, we get $\beta>1$ for any $p\in(1,2^*)$ if $d\ge3$, and any $p\in(1,5)$ if $d=2$. On the contrary, we find that $\beta<0$ if $p>5$ and $d=2$. If $p=5$ and $d=2$, then we have $\mu=\big(\frac1\theta-3\big)\,\beta^2+1$, so that $\theta$ has to be chosen strictly larger than $1/3$.

Collecting the results of Section~\ref{Sec:Flow}, with $\Lambda_\star$ given by \eqref{LambdaStar}, we have found that
\begin{prop}\label{Prop:flow} Let $d\ge2$, $p\in(1,2)\cup(2,2^*)$ and assume that $\beta$ and $\theta$ are given by~\eqref{Eqn:Choice} if $p\neq5$ or $d\neq2$. If $p=5$ and $d=2$, we assume that $\theta\in(1/3,1)$ and $\beta=\sqrt{\theta/(3\,\theta-1)}$. With $\kappa$ given by~\eqref{Eqn:kappa}, we get
\[
\frac1{2\,\beta^2}\,\frac d{dt}\mathcal F[u]\le(\lambda-\Lambda_\star)\iM{|\nabla u|^2}
\]
if $u$ is a smooth positive solution of \eqref{flow}.\end{prop}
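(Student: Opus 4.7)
The plan is to assemble the three preceding lemmas together with the variational definition of $\Lambda_\star$. First, starting from the identity of Lemma~\ref{Lem:F}, I would split the $(\Delta_g u)^2$ term by writing $1=(1-\theta)+\theta$ and regrouping, so that the identity becomes
\[
\frac{1}{2\beta^2}\,\frac{d}{dt}\mathcal{F}[u] = -(1-\theta)\iM{(\Delta_g u)^2} - \mathcal{G}[u] + \lambda\iM{|\nabla u|^2},
\]
with $\mathcal{G}[u]$ as introduced just before Lemma~\ref{Lem:G}. Substituting the identity of Lemma~\ref{Lem:G} will then express $\mathcal{G}[u]$ as the sum of the nonnegative square $\frac{\theta d}{d-1}\int\|\mathrm{Q}_g^\theta u\|^2\,dv_g$, the Ricci term $\frac{\theta d}{d-1}\int\Ric(\nabla u,\nabla u)\,dv_g$, and a residual contribution $-\mu\int|\nabla u|^4/u^2\,dv_g$.

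The crux of the argument is the elimination of this last quartic term. With the choices \eqref{Eqn:Choice} of $\beta$ and $\theta$ --- obtained, as the authors describe, by first choosing $\beta$ at the critical point of $\mu$ viewed as a quadratic in $\beta$, and then fixing $\theta$ so that the minimum value is zero --- one verifies directly that $\mu=0$. In the exceptional case $p=5$, $d=2$, the coefficient of $\beta^2$ in $\mu$ collapses, leaving $\mu=(1/\theta-3)\beta^2+1$; the prescription $\theta\in(1/3,1)$ with $\beta=\sqrt{\theta/(3\theta-1)}$ again makes $\mu$ vanish. In both cases the $|\nabla u|^4/u^2$ contribution drops out of the identity.

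To finish I would invoke the definition \eqref{LambdaStar} of $\Lambda_\star$. A short algebraic check shows that for the prescribed values of $\theta$ and $\beta$, the coefficient $\frac{1}{\theta}\,\frac{d-1}{d+2}\,(\kappa+\beta-1)$ in $\mathrm{Q}_g^\theta$ coincides with $\frac{(d-1)(p-1)}{\theta(d+3-p)}$ from the definition of $\mathrm{Q}_g$ in Section~\ref{Sec:Intro}, so that $\mathrm{Q}_g^\theta u = \mathrm{Q}_g u$. The variational definition of $\Lambda_\star$ then yields
\[
(1-\theta)\iM{(\Delta_g u)^2} + \frac{\theta d}{d-1}\iM{\bigl[\,\|\mathrm{Q}_g u\|^2 + \Ric(\nabla u,\nabla u)\bigr]} \ge \Lambda_\star\iM{|\nabla u|^2},
\]
and inserting this estimate into the preceding identity delivers the claimed bound. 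The only delicate point is the algebraic identification of $\mathrm{Q}_g^\theta$ with $\mathrm{Q}_g$ at the chosen parameters; regularity is not an issue under the standing hypothesis that $u$ is a smooth positive solution, which justifies all the integrations by parts invoked in Lemmas~\ref{Lem:F} and~\ref{Lem:G}.
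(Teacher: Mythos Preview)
Your proposal is correct and follows essentially the same route as the paper, which simply ``collects the results of Section~\ref{Sec:Flow}'': split the $(\Delta_g u)^2$ term via $1=(1-\theta)+\theta$, apply Lemma~\ref{Lem:G}, kill the quartic term by the parameter choices~\eqref{Eqn:Choice} (or the special prescription when $p=5$, $d=2$), identify $\mathrm Q_g^\theta$ with $\mathrm Q_g$, and invoke the variational definition~\eqref{LambdaStar} of $\Lambda_\star$. One small caveat worth flagging: in the exceptional case $p=5$, $d=2$ one has $d+3-p=0$, so the coefficient in the definition of $\mathrm Q_g$ from Section~\ref{Sec:Intro} is formally singular and the identification $\mathrm Q_g^\theta=\mathrm Q_g$ cannot be carried out as written; here $\Lambda_\star$ should be read with $\mathrm Q_g^\theta$ for the chosen $\theta$ and $\beta$, a point the paper also leaves implicit.
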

The case $d=1$ has to be handled separately because Lemma~\ref{BW1} does not make sense. However, computations are much easier because
\[
\iM{u''\,\frac{|u'|^2}u}=\frac 13\iM{\frac{|u'|^4}{u^2}}
\]
so that there is no cross term to compute. The reader is invited to check that Lemmata~\ref{Lem:FirstEigenvalue}, \ref{Lem:Lp} and \ref{Lem:F} hold when $d=1$, thus showing that
\[
\frac1{2\,\beta^2}\,\frac d{dt}\mathcal F[u]=-\iM{\left[|u''|^2-\,\mu\,\frac{|u'|^4}{u^2}\right]}+\lambda\iM{|u'|^2}\;.
\]
with $-\mu=\frac 13\,(\kappa+\beta-1)+\kappa\,(\beta-1)=\frac 13\,\beta\,(p-1)+\big(1+\beta\,(p-2)\big)\,(\beta-1)$. Hence, for all $p>1$, there are two values of $\beta$, given by
\be{Eqn:BetaPM}
\beta_\pm:=\frac{p-4\pm\sqrt{(p-1)\,(p+2)}}{3\,(p-2)}\;,
\ee
such that $\mu=0$. For $\beta=\beta_\pm$, we have found that
\[
\frac1{2\,\beta^2}\,\frac d{dt}\mathcal F[u]=-\iM{|u''|^2}+\lambda\iM{|u'|^2}\le0
\]
for any $\lambda\le\lambda_1$.
\begin{prop}\label{Prop:flow-d=1} Let $d=1$, $p\in(1,2)\cup(2,+\infty)$. Assume that $\kappa$ and $\beta$ are given respectively by~\eqref{Eqn:kappa} and by~\eqref{Eqn:BetaPM}. Then
\[
\frac1{2\,\beta^2}\,\frac d{dt}\mathcal F[u]\le(\lambda-\lambda_1)\iM{|\nabla u|^2}
\]
if $u$ is a smooth positive solution of \eqref{flow} and
\begin{enumerate}
\item[(i)] either $p>2$ and $\beta\in(-\infty,\beta_-]\cup[\beta_+,+\infty)$,
\item[(ii)] or $p\in(1,2)$ and $\beta\in[\beta_-,0)\cup(0,\beta_+]$.
\end{enumerate}\end{prop}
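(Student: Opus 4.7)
The plan is to run the one-dimensional analogue of the proof of Proposition~\ref{Prop:flow}, which is significantly cleaner because the tensor-valued object $\mathrm L_g u$ degenerates in dimension one and the Ricci term is vacuous. The starting point is that the proofs of Lemmata~\ref{Lem:Lp} and~\ref{Lem:F} carry over verbatim to $d=1$, since they use only integration by parts and the pointwise form of the flow; this produces
\[
\frac{1}{2\,\beta^2}\,\frac{d}{dt}\,\mathcal{F}[u]=-\iM{\Big[(u'')^2+(\kappa+\beta-1)\,u''\,\frac{|u'|^2}{u}+\kappa\,(\beta-1)\,\frac{|u'|^4}{u^2}\Big]}+\lambda\,\iM{|u'|^2}.
\]
What replaces Lemma~\ref{BW2} in dimension one is the elementary identity $\iM{u''\,|u'|^2/u}=\tfrac13\iM{|u'|^4/u^2}$, obtained by a single integration by parts on $u'\cdot|u'|^2/u$ with no boundary terms (and, indeed, Lemma~\ref{BW2} collapses to this identity when $\mathrm L_g u\equiv 0$). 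Substituting it back lumps every $|u'|^4/u^2$ contribution into a single coefficient and yields
\[
\frac{1}{2\,\beta^2}\,\frac{d}{dt}\,\mathcal{F}[u]=-\iM{(u'')^2}-\mu\,\iM{\frac{|u'|^4}{u^2}}+\lambda\,\iM{|u'|^2},
\]
with $\mu=\tfrac13(\kappa+\beta-1)+\kappa(\beta-1)$, exactly as announced in the text preceding the proposition.

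The next step is to insert $\kappa=1+\beta(p-2)$ and treat $\mu$ as a quadratic in $\beta$ with leading coefficient $p-2$. A short computation shows that its discriminant simplifies to $(p-4)^2+9(p-2)=(p-1)(p+2)$, confirming that the roots are precisely $\beta_\pm$ of~\eqref{Eqn:BetaPM}. The sign of the leading coefficient then identifies the connected components of $\{\mu\ge0\}$: for $p>2$ the parabola opens upward and $\{\mu\ge0\}=(-\infty,\beta_-]\cup[\beta_+,+\infty)$, matching case~(i); for $p\in(1,2)$ the parabola opens downward and $\{\mu\ge0\}$ is the bounded interval between the two roots, from which case~(ii) reads off the admissible $\beta$'s. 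In both cases the value $\beta=0$ is removed because there $v=u^\beta\equiv 1$ and both the flow~\eqref{flow} and the functional $\mathcal F$ degenerate.

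With $\mu\ge0$ in hand, the middle term is nonpositive and can be discarded, while Lemma~\ref{Lem:FirstEigenvalue}—whose proof uses only Cauchy--Schwarz together with the Poincar\'e inequality~\eqref{Ineq:Poincare}, and so applies in every dimension—delivers $\iM{(u'')^2}\ge\lambda_1\iM{|u'|^2}$. Combining the two inequalities gives the desired contraction bound
\[
\frac{1}{2\,\beta^2}\,\frac{d}{dt}\,\mathcal{F}[u]\le(\lambda-\lambda_1)\,\iM{|u'|^2}.
\]
I do not expect any substantive obstacle here: the computational content reduces to a quadratic sign analysis for $\mu$, and the analytic ingredients (one integration by parts, plus the Poincar\'e-based eigenvalue bound) are immediate in $d=1$. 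The only delicate point is the bookkeeping that matches the connected components of $\{\mu\ge 0\}\setminus\{0\}$ to the intervals listed in (i) and (ii).
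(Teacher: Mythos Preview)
Your proof is correct and follows essentially the same approach as the paper: the paper's argument (contained in the paragraph immediately preceding the proposition) likewise invokes Lemmata~\ref{Lem:Lp}, \ref{Lem:F} and~\ref{Lem:FirstEigenvalue}, substitutes the one-dimensional identity $\iM{u''\,|u'|^2/u}=\tfrac13\iM{|u'|^4/u^2}$, and reduces everything to the sign of the same quadratic $\mu$ in~$\beta$. Your explicit computation of the discriminant and the parabola-orientation argument for cases~(i)--(ii) are a bit more detailed than what the paper records, but the route is identical.
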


\medskip In order to justify the above computations, we have to establish that the solution of \eqref{flow} is smooth and converges to a constant. Since this is not at all the scope of this paper, let us simply sketch that strategy of proof. A positive solution of \eqref{Eqn} of class $C^2$ is actually smooth by standard elliptic theory, and thus we know that the initial datum for \eqref{flow} is smooth. If we simply consider an optimal function for \eqref{Ineq:Interp}, we may proceed by regularization and approximate the solution by a bounded solution, which is also positive, bounded away from zero. A further regularization of the initial datum allows to get back to the setting of Theorems~\ref{Thm:LV}-\ref{Thm:Main2}. By applying the Maximum Principle, we can then extend the boundedness properties (from above and from below) of the initial datum to the solution of \eqref{flow} for any positive time. Regularity results of Harnack and Aronson-B\'enilan type like the ones in \cite{MR2487898} and subsequent papers then hold. Higher order regularity results follow: see for instance \cite{MR2282669} in the Euclidean case. The delayed regularity phenomenon can be avoided by starting with smooth enough initial data.

It remains to show that the solution converges to a constant. One can argue as follows. Using Lemma~\ref{Lem:Lp} and H\"older's inequality, we know that $\iM{u^{2\,\beta}}$ is bounded uniformly in $t$ from above if $p>2$ and from below if $p\in(1,2)$. By \eqref{Eqn:L2}, is bounded uniformly in $t$ and, as a consequence, $\int_0^{+\infty}\(\iM{|\nabla u|^2}\)dt$ is finite. Standard arguments allow then to prove that $\lim_{t\to+\infty}\iM{|\nabla u|^2}=0$.

\section{Proofs}\label{Sec:Proofs}

Let us consider $\Lambda_\star$ and $\theta$ as defined by \eqref{LambdaStar} and \eqref{Eqn:Choice}.
\begin{prop}\label{Prop:Estimates} Using the notations of Section~\ref{Sec:Intro}, we have the estimates
\[
(1-\theta)\,\lambda_1+\theta\,\frac{d\,\rho}{d-1}\le\lambda_\star\le\Lambda_\star\le\lambda_1\;.
\]\end{prop}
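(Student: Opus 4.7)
The plan is to handle the three inequalities in order, each via a short, targeted argument.

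\textbf{Lower bound} $(1-\theta)\,\lambda_1+\theta\,\frac{d\,\rho}{d-1}\le\lambda_\star$. This should follow by applying Lemma~\ref{Lem:FirstEigenvalue} to the $(\Delta_gu)^2$ term and the pointwise bound $\Ric(\nabla u,\nabla u)\ge\rho\,|\nabla u|^2$ (which is the definition of $\rho$) to the Ricci term in the numerator of $\lambda_\star$. Summing the two estimates,
\[
\iM{\Big[(1-\theta)\,(\Lap u)^2+\tfrac{\theta\,d}{d-1}\,\Ric(\nabla u,\nabla u)\Big]}\ge\Big((1-\theta)\,\lambda_1+\tfrac{\theta\,d\,\rho}{d-1}\Big)\iM{|\nabla u|^2},
\]
and dividing by $\iM{|\nabla u|^2}$ yields the claim after taking the infimum. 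Note that $\theta\in(0,1)$ is needed so that both coefficients are nonnegative, which is guaranteed for $p\in(1,2^*)$ by the formula for $\theta$.

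\textbf{Middle inequality} $\lambda_\star\le\Lambda_\star$. This is immediate from the definitions, since the numerator of $\Lambda_\star$ differs from that of $\lambda_\star$ only by the nonnegative term $\frac{\theta\,d}{d-1}\iM{\|\mathrm Q_gu\|^2}$.

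\textbf{Upper bound} $\Lambda_\star\le\lambda_1$. The strategy is to test the Rayleigh-type quotient defining $\Lambda_\star$ against the family $u_\eps:=1+\eps\,\varphi$, where $\varphi$ is an eigenfunction of $-\Lap$ associated with $\lambda_1$ (normalized, with zero mean). For $\eps$ small, $u_\eps$ is smooth and bounded away from zero, so the quotient is well defined. The crucial observation is that the nonlinear correction appearing in $\mathrm Q_gu_\eps$, namely $\frac{\nabla u_\eps\otimes\nabla u_\eps}{u_\eps}-\frac gd\,\frac{|\nabla u_\eps|^2}{u_\eps}$, is of order $\eps^2$, while $\mathrm L_gu_\eps=\eps\,\mathrm L_g\varphi$ is of order $\eps$. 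Therefore $\mathrm Q_gu_\eps=\eps\,\mathrm L_g\varphi+O(\eps^2)$ in $C^0$, giving $\iM{\|\mathrm Q_gu_\eps\|^2}=\eps^2\iM{\|\mathrm L_g\varphi\|^2}+O(\eps^3)$. All other quantities in the quotient are exactly $\eps^2$ times the corresponding quantities for $\varphi$. Dividing numerator and denominator by $\eps^2$ and letting $\eps\to0$,
\[
\Lambda_\star\le\frac{(1-\theta)\iM{(\Delta_g\varphi)^2}+\frac{\theta\,d}{d-1}\,\Big[\iM{\|\mathrm L_g\varphi\|^2}+\iM{\Ric(\nabla\varphi,\nabla\varphi)}\Big]}{\iM{|\nabla\varphi|^2}}.
\]
By Lemma~\ref{BW1}, the bracketed quantity equals $\frac{d-1}{d}\iM{(\Delta_g\varphi)^2}$, so the numerator collapses to $\iM{(\Delta_g\varphi)^2}=\lambda_1^2\iM{\varphi^2}=\lambda_1\iM{|\nabla\varphi|^2}$, and the quotient evaluates to $\lambda_1$.

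The only mildly delicate point is the $\eps$-expansion for $\mathrm Q_gu_\eps$: one must verify the denominator $u_\eps$ is bounded away from zero so that $1/u_\eps=1+O(\eps)$ uniformly, and keep track that the correction enters quadratically in $\nabla u_\eps$, hence contributes only to $O(\eps^2)$ inside $\mathrm Q_g$, and therefore only to $O(\eps^3)$ after squaring and integrating. Everything else is an accounting of eigenfunction identities.
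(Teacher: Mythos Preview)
Your proof is correct and follows essentially the same approach as the paper: Lemma~\ref{Lem:FirstEigenvalue} plus the pointwise Ricci bound for the first inequality, nonnegativity of $\|\mathrm Q_gu\|^2$ for the middle one, and the test family $u_\eps=1+\eps\,v$ together with Lemma~\ref{BW1} for the upper bound. The only cosmetic difference is that the paper plugs in a generic $v$, obtains $\Lambda_\star\le\iM{(\Delta_gv)^2}/\iM{|\nabla v|^2}$ for all $v$, and then identifies the infimum as $\lambda_1$ via Lemma~\ref{Lem:FirstEigenvalue}, whereas you go straight to the eigenfunction~$\varphi$.
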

\begin{proof} The first inequality is a consequence of Lemma~\ref{Lem:FirstEigenvalue}. For the second one, we observe that
\[
\lim_{\eps\to0}\frac1{\eps^2}\,\|\mathrm Q_g^\theta(1+\eps\,v)\|^2=\|\mathrm L_gv\|^2\,.
\]
Using $u=1+\eps\,v$ as a test function, we find that
\[
\Lambda_\star\le\inf_{v\in\H^2(\M)\setminus\{0\}}\frac{\iM{(\Delta_gv)^2}}{\iM{|\nabla v|^2}}=\lambda_1
\]
according to Lemma~\ref{BW1} and Lemma~\ref{Lem:FirstEigenvalue}.\end{proof}

\begin{proof}[Proof of Theorem~\ref{Thm:Main2}] If $v=u^\beta$ is a solution to~\eqref{Eqn}, it is straightforward to check that
\[
\Lap u+(\beta-1)\,\frac{|\nabla u|^2}u-\frac\lambda{\beta\,(p-2)}\,(u-u^\kappa)=0\;.
\]
Using the fact that $\iM{u^\kappa\(\Lap u+\kappa\,\frac{|\nabla u|^2}u\)}=0$, it follows by Proposition~\ref{Prop:flow} that at $t=0$ we have
\begin{multline*}
0=-\iM{\(\Lap u+(\beta-1)\,\frac{|\nabla u|^2}u-\frac\lambda{\beta\,(p-2)}\,(u-u^\kappa)\)\(\Lap u+\kappa\,\frac{|\nabla u|^2}u\)}\\
=\frac1{2\,\beta^2}\,\frac d{dt}\mathcal F[u]\le(\lambda-\Lambda_\star)\iM{|\nabla u|^2}\;,
\end{multline*}
so that $u$ is constant if $\lambda<\Lambda_\star$.\end{proof}

\begin{proof}[Proof of Theorem~\ref{Thm:Inequality}] We apply Proposition~\ref{Prop:flow} to $u(t=0,\cdot)=v^{1/\beta}$. For $\lambda\le\Lambda_\star$, $\mathcal F[u]$ is non-increasing. We know that as $t\to\infty$ $u$ and $\nabla u$ converge respectively to a constant and to $0$, which proves that $\mathcal F[u(t=0,\cdot)]\ge\mathcal F[u(t,\cdot)]\ge0=\lim_{t\to\infty}\mathcal F[u(t,\cdot)]$ for any $t\ge0$. Indeed, by considering the functional $\mathcal F$ for some $\lambda<\Lambda_\star$, we know that $\int_0^\infty\iM{|\nabla u|^2}\;dt$ is finite, which shows that $u(t,\cdot)$ converges to a positive constant $c$ as $t\to\infty$. This convergence is in a weak sense, but classical results for porous medium equations apply and it is not difficult to obtain the uniform convergence. See~\cite{MR2282669,Vazquez07} and references therein for results concerning the porous media. Then standard parabolic methods apply and convergence holds in $C^k(\M)$, for an arbitrary $k\ge1$.

Hence, for any $\lambda\le\Lambda_\star$, this proves that $\mathcal F[u]$ is non-negative for any $u$, \emph{i.e.}, that \eqref{Ineq:Interp} holds. It remains to consider the case $\lambda=\Lambda_\star<\lambda_1$, for which we do not have a rigidity result. Let us prove that optimal functions in \eqref{Ineq:Interp}, that is, functions for which the inequality becomes an equality, are all constants. Let~$v$ be an optimal function for~\eqref{Ineq:Interp}. We can assume that $\nrm vp=1$ with no restriction and if $u$ is the solution of~\eqref{flow} with initial datum $v^{1/\beta}$, then
\[
\mathcal F[u]=\int_0^\infty\left[(1-\theta)\iM{(\Delta_gu)^2}+\frac{\theta\,d}{d-1}\iM{\Big[\,\|\mathrm Q_g u\|^2+\Ric(\nabla u,\nabla u)\Big]}-\,\Lambda_\star\iM{|\nabla u|^2}\right]dt=0\;,
\]
where equality holds for smooth solutions, but can also be obtained by a regularization procedure in the other cases. If we write that $u(t,x)=c+\varepsilon\,\varphi(t,x)$, where $\varepsilon=\varepsilon(t)\to0$ is adjusted so that $\nrM{\nabla\varphi}2^2+\nrM{\Lap\varphi}2^2=1$, then we can write as in the proof of Lemma~\ref{BW2} that
\[
\iM{\Delta_gu\,\frac{|\nabla u|^2}u}=\iM{\frac{|\nabla u|^4}{u^2}}-2\iM{[\mathrm H_g u]\cdot \left[\frac{\nabla u \otimes \nabla u}u\right]}
\]
after one integration by parts. On the one hand we have
\begin{eqnarray*}
&&\iM{\Delta_gu\,\frac{|\nabla u|^2}u}=\varepsilon^3\iM{\Delta_g\varphi\,\frac{|\nabla \varphi|^2}c}\big(1+O(\varepsilon)\big)\,,\\
&&\iM{[\mathrm H_g u]\cdot \left[\frac{\nabla u \otimes \nabla u}u\right]}=\varepsilon^3\iM{[\mathrm H_g \varphi]\cdot \left[\frac{\nabla \varphi \otimes \nabla \varphi}c\right]}\big(1+O(\varepsilon)\big)\,,
\end{eqnarray*}
and on the other hand, using again an integration by parts, we know that
\[
\iM{\Delta_g\varphi\,|\nabla \varphi|^2}=-\,2\,\iM{[\mathrm H_g \varphi]\cdot \left[\nabla \varphi \otimes \nabla \varphi\right]}\,.
\]
This shows that
\[
\iM{\frac{|\nabla u|^4}{u^2}}=O(\varepsilon^4)\,,
\]
so that $\|\mathrm Q_g u\|^2\sim\|\mathrm L_g u\|^2$ as $t\to+\infty$. Arguing as in Proposition~\ref{Prop:Estimates} and using Lemma \ref{BW1}, we find that
\begin{multline*}
0=(1-\theta)\iM{(\Delta_gu)^2}+\frac{\theta\,d}{d-1}\iM{\Big[\,\|\mathrm Q_g u\|^2+\Ric(\nabla u,\nabla u)\Big]}-\,\Lambda_\star\iM{|\nabla u|^2}\\
\sim[(1-\theta)\iM{(\Delta_gu)^2}+\frac{\theta\,d}{d-1}\iM{\Big[\,\|\mathrm L_g u\|^2+\Ric(\nabla u,\nabla u)\Big]}-\,\Lambda_\star\iM{|\nabla u|^2}\\
=\iM{(\Delta_gu)^2}-\,\Lambda_\star\iM{|\nabla u|^2}\ge\(\lambda_1-\,\Lambda_\star\)\iM{|\nabla u|^2}\;,
\end{multline*}
thus obtaining an obvious contradiction if $\Lambda_\star<\lambda_1$, unless $u$ is constant. Hence, in that case, we know that all minimizers of~$\mathcal F$ are constants when $\lambda=\Lambda_\star$.

Now let us consider the optimal constant $\Lambda$ in \eqref{Ineq:Interp}. If $\Lambda=\Lambda_\star$, consider a sequence $(\lambda_n)_{n\in\N}$ such that $\lambda_n>\Lambda_\star$ for any $n\in\N$, $\lim_{n\to\infty}\lambda_n=\Lambda_\star$ and a sequence $(v_n)_{n\in\N}$ of functions in $\H^1(\M)$ such that $\nrm{v_n}2=1$ for any $n\in\N$ and $\mathcal F_{\lambda_n}[v_n]<0$ where, with obvious notations, $\mathcal F_{\lambda_n}$ denotes the functional $\mathcal F$ written for $\lambda=\lambda_n$. Actually we can consider minimizers of $\mathcal F_{\lambda_n}$ in $\{u\in\H^1(\M)\,:\,\nrm u2=1\}$. Let us consider the quotient
\[
\mathcal Q[v]:=\frac{(p-2)\,\nrm{\nabla v}2^2}{\nrm vp^2-\nrm v2^2}
\]
and observe that the optimal constant $\Lambda$ is such that
\[
\Lambda=\inf_{v\in\H^1(\M)\setminus\{0\}}\mathcal Q[v]=\lim_{n\to\infty}\mathcal Q[v_n]\;.
\]
Up to the extraction of a subsequence, $\seq vn$ converges to a limit $v$ which solves the Euler-Lagrange equation~\eqref{Eqn}. Multiplying the equation by $v$ and integrating, we see that $v$ is a minimizer of $\mathcal F_{\Lambda_\star}$ and is therefore constant. Because of the constraint $\nrm {v_n}2=1$, the limit is $1$. Let us define $w_n:=\eps_n\,(v_n-1)$ with $\eps_n:=\nrm{v_n-1}2\to 0$. A standard computation then shows that
\[
\mathcal Q[v_n]=\mathcal Q[1+\eps_n\,w_n]\sim\frac{\nrm{\nabla w_n}2^2}{\nrm{w_n}2^2}
\]
from which we deduce that $\Lambda_\star\ge\lambda_1$, a contradiction. Hence $\Lambda_\star<\lambda_1$ means that $\Lambda>\Lambda_\star$, which concludes the proof.
\end{proof}

For completeness, let us come back to the case $\theta=1$ considered in Lemma~\ref{Lem:beta}. It is striking that various flows can be used for the same inequality (see \cite[Section~3.12]{Demange-PhD} and \cite{DEKL}). There is a whole interval of values of $\beta$ which makes $\mu$ defined as in Lemma~\ref{Lem:beta} negative if we choose $\theta=1$. As a consequence, we have the following result.
\begin{cor}\label{Cor:Theta=1} Assume that $d\ge5$ and $p\in(1,2^*)$. For any $\beta\in(\beta_-(p),\beta_+(p))$, then we have
\[
\frac d{dt}\mathcal F[u]\le0
\]
if $u$ is a solution of \eqref{flow}.\end{cor}
If we come back to the strategy of \cite{MR2381156,MR2459454}, the specific choice that was made there corresponds to $\beta=\frac4{6-p}$ and in the critical case, we find that $\beta=\beta_\pm(2^*)$ for any $d\ge3$. However, although admissible, such a choice of $\beta$ is not optimal in the subcritical range $p\in(2,2^*)$: if we minimize $\theta$, that is, if we optimize on the estimate of the constant in the interpolation inequality~\eqref{Ineq:Interp} for $p\in(2,2^*)$, $\beta=\frac4{6-p}$ is not the best possible choice. Moreover, this choice is not admissible in the whole range $p\in(1,2)$.

\section{Improved results and consequences}\label{Sec:CNS}

Theorem~\ref{Thm:LV} is a consequence of Proposition~\ref{Prop:Estimates}. The expression of $\Lambda_\star$ in Theorem~\ref{Thm:Main2} is not as simple as the one of $\lambda_\star$ in Theorem~\ref{Thm:Main} but provides an improved condition for \emph{rigidity}. Up to now, we have dealt only with power law nonlinearities. It is not difficult to generalize our results to nonlinearities that compare with power laws, for instance $f(v)=|v|^{p-1}\,v+|v|^{q-1}\,v$ for some $q>p$. Our first extension of Theorem~\ref{Thm:Main2} goes as follows.
\begin{thm}\label{Thm:General} Let $f$ be a Lipschitz increasing function such that
\[
\frac1{p-2}\(f'(v)-(p-1)\,\frac{f(v)}v\)\le 0\quad\forall\,v>0\;\mbox{a.e.}
\]
Then under the same conditions as in Theorem~\ref{Thm:Main2}, for any $\lambda\in(0,\Lambda_\star)$, any smooth positive solution to the equation
\[
-\,\Lap v+\frac\lambda{p-2}\,\big(v-f(v)\big)=0
\]
is equal to a constant $c$ which satisfies $f(c)=c$.
\end{thm}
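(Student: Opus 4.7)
The plan is to rerun the proof of Theorem~\ref{Thm:Main2} essentially verbatim, with $v^{p-1}$ replaced by $f(v)$, and to show that the single new term that appears is favourably signed thanks to the hypothesis on $f$.

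First I would set $u:=v^{1/\beta}$ with $\beta$ as in~\eqref{Eqn:Choice}. The equation $-\Delta_g v + \frac{\lambda}{p-2}v = f(v)$ then becomes
\[
\Delta_g u + (\beta-1)\,\frac{|\nabla u|^2}{u} \;=\; \frac{\lambda\,u}{\beta\,(p-2)} \;-\; \frac{f(u^\beta)}{\beta\,u^{\beta-1}}.
\]
I then multiply by $\Delta_g u+\kappa\,|\nabla u|^2/u$, with $\kappa$ from~\eqref{Eqn:kappa}, and integrate over $\M$. Using $\iM{u\,(\Delta_g u+\kappa\,|\nabla u|^2/u)} = (\kappa-1)\iM{|\nabla u|^2} = \beta(p-2)\iM{|\nabla u|^2}$ exactly as in the proof of Theorem~\ref{Thm:Main2}, the purely quadratic part assembles into $\frac{1}{2\beta^2}\frac{d}{dt}\mathcal F[u]|_{t=0}$ in the sense of Lemma~\ref{Lem:F}. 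For the remaining $f$-contribution, setting $h(u):=f(u^\beta)/u^{\beta-1}$ and using the identity $\kappa+\beta-1 = \beta(p-1)$ that already drives Section~\ref{Sec:Flow}, integration by parts gives
\[
\iM{\frac{f(u^\beta)}{u^{\beta-1}}\Big(\Delta_g u+\kappa\,\frac{|\nabla u|^2}{u}\Big)} \;=\; \beta \iM{\Big[(p-1)\,\frac{f(v)}{v} - f'(v)\Big]\,|\nabla u|^2}.
\]
Combining the two pieces produces the key identity
\[
0 \;=\; \frac{1}{2\beta^2}\,\frac{d}{dt}\mathcal F[u]\Big|_{t=0} \;+\; \iM{\Big[f'(v) - (p-1)\,\frac{f(v)}{v}\Big]\,|\nabla u|^2},
\]
which for $f(v)=\frac{\lambda}{p-2}v^{p-1}$ collapses exactly to the identity used in Theorem~\ref{Thm:Main2}.

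Applying Proposition~\ref{Prop:flow} to the first summand yields
\[
0 \;\le\; (\lambda-\Lambda_\star)\iM{|\nabla u|^2} \;+\; (p-2)\iM{\frac{1}{p-2}\Big[f'(v)-(p-1)\,\frac{f(v)}{v}\Big]\,|\nabla u|^2}.
\]
For $p\in(2,2^*)$ both summands are non-positive: the first because $\lambda<\Lambda_\star$, the second because of the hypothesis on $f$ together with $p-2>0$. Hence both must vanish; since $\lambda<\Lambda_\star$ strictly, this forces $|\nabla u|\equiv 0$, so $v$ is constant, and substituting $v\equiv c$ back into the equation yields $\lambda=(p-2)\,f(c)/c$. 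For $p\in(1,2)$ the statement is vacuous: $\lambda/(p-2)<0$ and $f\ge 0$ together force $-\Delta_g v>0$ pointwise on any positive $v$, contradicting $\iM{\Delta_g v}=0$, so no positive solution exists in the first place.

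The main obstacle is the integration-by-parts bookkeeping for the $f$-term and the careful tracking of signs. The decisive observation is that the very cancellation $\kappa+\beta-1 = \beta(p-1)$ which made the power-law case close up in Section~\ref{Sec:Flow} is precisely what converts the extra $f$-contribution into a quantity with a definite sign under the stated monotonicity condition; once that is in place, everything reduces to the already-established Proposition~\ref{Prop:flow}.
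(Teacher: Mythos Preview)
Your argument is correct and follows the paper's own approach: both test the rewritten equation against the flow direction $\Delta_g u+\kappa\,|\nabla u|^2/u$, invoke Proposition~\ref{Prop:flow} for the quadratic part, and observe that the extra $f$-contribution carries the right sign thanks to the hypothesis on $f$. The paper phrases this last point as ``$\iM{F(v)}$ is no longer conserved, but $\frac{1}{p-2}\,\frac{d}{dt}\iM{F(v)}\ge 0$,'' while you compute the same integral directly via the integration-by-parts identity $\iM{h(u)(\Delta_g u+\kappa\,|\nabla u|^2/u)}=\beta\iM{[(p-1)\,f(v)/v-f'(v)]\,|\nabla u|^2}$; these are the same calculation, and your use of $\kappa+\beta-1=\beta(p-1)$ is exactly the mechanism the paper relies on.

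One remark on the case $p\in(1,2)$: your non-existence argument is sound (indeed $-\Delta_g v>0$ pointwise forces a contradiction with $\iM{\Delta_g v}=0$), and it is a useful clarification that the paper does not make explicit. Note, however, that strictly speaking this shows the theorem's existence assertion \emph{fails} for $p<2$ (no constant $c>0$ can satisfy $\lambda=(p-2)\,f(c)/c$ when $\lambda>0$ and $f\ge 0$), rather than being vacuous; this is a wrinkle in the statement of the theorem itself, not in your proof.
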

This is a characterization of the $C^2(\M)$ solutions, which are all constant if they exist. In particular, if the equation $f(c)=c$ has no solution in $\R$, this means that the above equation has no smooth positive solution.

\begin{proof}[Proof of Theorem~\ref{Thm:General}] As in the proof of Theorem~\ref{Thm:Main2} the function $u$ such that $v=u^\beta$ is a solution to
\[
-\Lap u-(\beta-1)\,\frac{|\nabla u|^2}u+\frac\lambda{\beta\,(p-2)}\(u-\frac{f(v)}v\,u\)=0\;.
\]
We may multiply the equation by $\(\Lap u+\kappa\,\frac{|\nabla u|^2}u\)$ and get
\begin{multline*}
0=-\iM{\(\Lap u+(\beta-1)\,\frac{|\nabla u|^2}u-\frac\lambda{\beta\,(p-2)}\,u\)\(\Lap u+\kappa\,\frac{|\nabla u|^2}u\)}\\
+\frac\lambda{p-2}\iM{\(f'(v)-(p-1)\,\frac{f(v)}v\)}
\le(\lambda-\Lambda_\star)\iM{|\nabla u|^2}\;,
\end{multline*}
where the inequality holds by our assumption on $f$ and by the same computations as in the proof of Theorem~\ref{Thm:Main2}. So, if $\lambda<\Lambda_\star$, $u$ is constant.\end{proof}

Let us next define
\be{Def:F}
F[v](x):=\nrM vp^p+p\int_{\nrM vp}^{v(x)}f(s)\;ds\,.
\ee
This functional enjoys the following properties:
\begin{enumerate}
\item[(i)] If $c$ is a positive real constant, $F[c]=c^p$.
\item[(ii)] If $f(s)=s^{p-1}$ for any $s>0$, then $\iM{F[v]}=\nrM vp^p$.
\end{enumerate}
The strategy for writing a functional inequality is essentially the same as the one of Theorem~\ref{Thm:Main2}, except that
\[
\iM{F[v]}=\iM{F[u^\beta]}
\]
is not preserved by the flow defined by \eqref{flow}. However, if $\lambda\in(0,\Lambda_\star)$, the functional
\[
\textstyle\mathcal F[u]:=\nrm{\nabla(u^\beta)}2^2-\frac{\lambda}{p-2}\,\left[{\(\iM{F[u^\beta]}\)}^{2/p}-\nrm{u^\beta}2^2\right]
\]
is still non-increasing because, with $v=u^\beta$, we have
\begin{multline*}
\frac1{p-2}\,\frac d{dt}\iM{F[v]}=\frac{\beta\,p}{p-2}\iM{\left[(\beta+\kappa-1)\,\frac{f(v)}v-\beta\,f'(v)\right]|\nabla u|^2}\\
= \frac{\beta^2\,p}{p-2}\iM{\left[(p-1)\,\frac{f(v)}v-f'(v)\right]|\nabla u|^2}\ge0\;.
\end{multline*}

Under some additional coercivity assumptions, \emph{e.g.}
\be{Eqn:Coercivity}
f(0)=0\quad\mbox{and}\quad\left\{\begin{array}{ll}
\lim_{s\to\infty}\frac{f(s)}s=\infty\quad&\mbox{if}\quad p>2\,,\\[6pt]
\lim_{s\to\infty}\frac{f(s)}s=0\quad&\mbox{if}\quad p<2\,,
\end{array}\right.
\ee
the functional $\mathcal F[u]$ is bounded from below, $\lim_{t\to\infty}\nrM{\nabla u(t,\cdot)}2=0$, and $\lim_{t\to\infty}\mathcal F[u(t,\cdot)]=0$. Hence we also have the inequality
\[
\nrm{\nabla v}2^2-\frac{\lambda}{p-2}\,\left[{\(\iM{F[v]}\)}^{2/p}-\nrm v2^2\right]\ge0\quad\forall\,v\in\H^1(\M)
\]
for any $\lambda\in(0,\Lambda_\star)$. Here $F$ is given as above by \eqref{Def:F}. Our method even provides an integral remainder term that can be computed using the flow, when $\lambda=\Lambda_\star$. If $u$ a smooth solution of~\eqref{flow} with initial datum $v^{1/\beta}$, let
\begin{multline*}
\mathcal R[v]:=\beta^2\int_0^\infty\left[(1-\theta)\iM{(\Delta_gu)^2}+\frac{\theta\,d}{d-1}\iM{\Big[\,\|\mathrm Q_g^\theta u\|^2+\Ric(\nabla u,\nabla u)\Big]}-\Lambda_\star\iM{|\nabla u|^2}\right]dt\\
+\frac{\Lambda_\star\,\beta^2}{p-2}\int_0^\infty\left[{\(\iM{F[u^\beta]}\)}^{\frac 2p-1}\iM{\left[(p-1)\,\frac{f(u^\beta)}{u^\beta}-\,f'(u^\beta)\right]|\nabla u|^2}\right]\,dt\;.
\end{multline*}
\begin{cor}\label{Cor:InequalityRemainder} Assume that the assumptions of Theorem~\ref{Thm:General} and \eqref{Eqn:Coercivity} hold. With the same notations as above and $\Lambda_\star$ defined by \eqref{LambdaStar}, for any $p\in(1,2)\cup(2,2^*]$ if $d\ge3$ and any $p\in(1,2)\cup(2,\infty)$ otherwise, for any smooth positive function $v$ on $\M$, we have the inequality
\[
\nrm{\nabla v}2^2-\frac{\Lambda_\star}{p-2}\,\left[\(\iM{F[v]}\)^{2/p}-\nrm v2^2\right]\ge2\,\mathcal R[v]\;.
\]
\end{cor}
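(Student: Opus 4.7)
The plan is to apply the flow~\eqref{flow} with initial datum $u(0,\cdot)=v^{1/\beta}$, where $\beta$ and $\theta$ are fixed by~\eqref{Eqn:Choice}, and to express $\mathcal F[v]-\lim_{t\to+\infty}\mathcal F[u(t,\cdot)]$ as the time integral of $-\frac{d}{dt}\mathcal F[u]$. By construction $u(0,\cdot)^\beta=v$, so $\mathcal F[u(0,\cdot)]$ coincides with the left-hand side of the claimed inequality, and the whole task reduces to identifying $-\frac{d}{dt}\mathcal F[u]$ with twice the integrand of $\mathcal R[v]$ and then discarding a nonnegative boundary term at $t=+\infty$.

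I would split $\frac{d}{dt}\mathcal F[u]$ into a \emph{quadratic} part coming from $\nrm{\nabla u^\beta}2^2+\frac{\Lambda_\star}{p-2}\nrm{u^\beta}2^2$ and a \emph{nonlinear correction} coming from $-\frac{\Lambda_\star}{p-2}(\iM{p\,F(u^\beta)})^{2/p}$. The first part is treated exactly as in Section~\ref{Sec:Flow}: combining Lemma~\ref{Lem:F} with Lemmata~\ref{BW1}, \ref{BW2}, \ref{Lem:G}, and using that the choice~\eqref{Eqn:Choice} forces $\mu=0$, one obtains
\[
-\,\tfrac{1}{2\beta^2}\,\tfrac{d}{dt}\Bigl[\nrm{\nabla u^\beta}2^2+\tfrac{\Lambda_\star}{p-2}\nrm{u^\beta}2^2\Bigr]=(1-\theta)\iM{(\Delta_g u)^2}+\tfrac{\theta\,d}{d-1}\iM{\|\mathrm Q_g^\theta u\|^2+\Ric(\nabla u,\nabla u)}-\Lambda_\star\iM{|\nabla u|^2},
\]
which, up to the factor $2\beta^2$, is the first integrand in $\mathcal R[v]$. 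For the nonlinear correction, I would redo the chain-rule/integration-by-parts computation already carried out in the proof of Theorem~\ref{Thm:General} to get
\[
\tfrac{d}{dt}\iM{F(u^\beta)}=\beta\iM{\bigl[(\beta+\kappa-1)\,\tfrac{f(u^\beta)}{u^\beta}-\beta\,f'(u^\beta)\bigr]|\nabla u|^2},
\]
and then differentiate the outer map $X\mapsto X^{2/p}$ at $X=\iM{p\,F(u^\beta)}$, producing precisely the second integrand in $\mathcal R[v]$.

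Integrating over $t\in[0,+\infty)$ yields $\mathcal F[v]-\lim_{t\to+\infty}\mathcal F[u(t,\cdot)]=2\,\mathcal R[v]$. To close the argument I would show that $\lim_{t\to+\infty}\mathcal F[u(t,\cdot)]\ge 0$. By the convergence discussion at the end of Section~\ref{Sec:Flow}, $u(t,\cdot)$ tends to a positive constant $c$, determined by the conserved quantity $\iM{u^{\beta p}}$. At this constant, $\nabla u\equiv 0$, so $\mathcal F[c^\beta]=-\frac{\Lambda_\star}{p-2}\bigl[(p\,F(c^\beta))^{2/p}-c^{2\beta}\bigr]$, and the interpolation inequality stated right after Theorem~\ref{Thm:General}, applied to the constant function $c^\beta$, forces this quantity to be nonnegative. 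Combined with the identity above, this gives $\mathcal F[v]\ge 2\,\mathcal R[v]$.

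The main obstacle is not algebraic but analytic: one needs enough smoothness, positivity and time-decay of $u(t,\cdot)$ to legitimize the integrations by parts of Lemmata~\ref{Lem:F} and~\ref{Lem:G}, to guarantee finiteness of $\mathcal R[v]$, and to reach the equilibrium $c$. As sketched at the end of Section~\ref{Sec:Flow}, this is handled by approximating $v$ by a smooth, strictly positive initial datum, propagating two-sided bounds via the Maximum Principle, invoking Harnack and Aronson--B\'enilan-type regularity estimates (see~\cite{MR2487898,MR2282669}), and finally passing to the limit in the approximation; the bound on $\int_0^\infty\iM{|\nabla u|^2}\,dt$ already used in Section~\ref{Sec:Flow} delivers the convergence of $u(t,\cdot)$ to a constant.
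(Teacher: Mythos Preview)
Your approach is exactly the one the paper has in mind: the corollary is stated without proof because it is meant to follow immediately from integrating $-\tfrac{d}{dt}\mathcal F[u(t,\cdot)]$ over $t\in[0,\infty)$ along the flow~\eqref{flow}, using the decomposition established in Lemmata~\ref{Lem:F} and~\ref{Lem:G} together with the computation of $\tfrac{d}{dt}\int_{\M}F(u^\beta)\,dv_g$ from the proof of Theorem~\ref{Thm:General}, and then dropping the nonnegative limit $\lim_{t\to\infty}\mathcal F[u(t,\cdot)]$.

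One bookkeeping point: your claim that $-\tfrac{d}{dt}\mathcal F[u]$ equals \emph{exactly} twice the integrand of $\mathcal R[v]$ is slightly off. Differentiating $X\mapsto X^{2/p}$ and using the formula from Theorem~\ref{Thm:General} gives a factor $2\beta$ (not $2$) in front of the second integrand of~$\mathcal R[v]$, so in fact
\[
\mathcal F[v]-\lim_{t\to\infty}\mathcal F[u(t,\cdot)]
=2\,\mathcal R[v]+2(\beta-1)\,\frac{\Lambda_\star}{p-2}\int_0^\infty\!\!\Big(\!\textstyle\int_{\M}pF(u^\beta)\,dv_g\Big)^{\frac2p-1}\!\!\int_{\M}\!\big[(\beta+\kappa-1)\tfrac{f(u^\beta)}{u^\beta}-\beta f'(u^\beta)\big]|\nabla u|^2\,dv_g\,dt\,.
\]
Since $\beta>1$ for the choice~\eqref{Eqn:Choice} and the sign hypothesis on $f$ makes the last integral nonnegative (this is precisely the monotonicity used in Theorem~\ref{Thm:General}), the extra term is $\ge 0$ and the stated inequality $\mathcal F[v]\ge 2\,\mathcal R[v]$ still follows after discarding the boundary term. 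Everything else in your outline, including the treatment of the limit at $t=\infty$ and the regularity discussion, matches the paper.
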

Notice that the coercivity assumption~\eqref{Eqn:Coercivity} shows that $\mathcal F$ is bounded from below. Because of the flow, we know that the solution $u$ of \eqref{flow} is such that $\nrM{\nabla u}2$ converges to $0$ a.e.~in $t$, as $t\to\infty$, so that $\lim_{t\to\infty}\mathcal F[u(t,\cdot)]=0$. Details of the proof are left to the reader.

\medskip Let us conclude this section by some comments in the case $p\in(1,2)$ and $d\ge2$. By the definition~\eqref{LambdaStar} of $\Lambda_\star$, we know that
\[
\Lambda_\star\ge(1-\theta)\,\lambda_1+\theta\,\rho_\star\quad\mbox{with}\quad\rho_\star:=\frac d{d-1}\,\inf_{u\in\H^2(\M)\setminus\{0\}}\frac{\iM{\Big[\,\|\mathrm Q_g u\|^2+\Ric(\nabla u,\nabla u)\Big]}}{\iM{|\nabla u|^2}}\;.
\]
We also know that $\rho_\star\ge\rho$ and hence $\Lambda_\star\ge\lambda_\star$. Let us denote by $\Lambda(p)$ the optimal constant in~\eqref{Ineq:Interp}. From Theorem~\ref{Thm:Inequality} and Proposition~\ref{Prop:Estimates}, we have
\[
\lambda_1\ge\Lambda(p)\ge\Lambda_\star(p)\ge(1-\theta)\,\lambda_1+\theta\,\rho_\star
\]
with
\[
\theta=\frac{(d-1)^2\,\eta}{d\,(d+2)+\eta}\;,\quad\eta=p-1\;.
\]
If $p=1$, we have $\Lambda(1)=\lambda_1$.

On the other hand, passing to the limit as $p\to2$ is straightforward so that we get
\[
\frac12\,\Lambda(2)\iM{|v|^2\,\log\(\frac{|v|^2}{\nrm v2^2}\)}\le\nrm{\nabla v}2^2\quad\forall\,v\in\H^1(\M)\;.
\]
Here $\Lambda(2)$ denotes the optimal constant and we have $\Lambda(2)\ge\Lambda_\star(2)\ge(1-\theta)\,\lambda_1+\,\theta\,\rho_\star$ with $\theta=((d-1)/(d+1))^2$. The interpolation method based on spectral estimates of \cite{MR954373} and later extended in \cite[Theorem~Ê2.4]{ABD}(also see \cite[Section~2.2]{DEKL}) shows that
\[
\Lambda(p)\ge\lambda_1\,\frac{1-\eta}{1-\eta^\alpha}\quad\mbox{with}\quad\alpha=\lambda_1/\Lambda(2)\quad\mbox{and}\quad\eta=p-1\;.
\]
Such an estimate is better than $\Lambda(p)\ge(1-\theta)\,\lambda_1+\theta\,\rho_\star$ at least in a neighborhood of $p=2$ if $\Lambda(2)>(1-\theta_2)\,\lambda_1+\theta_2\,\rho_\star$ with $\theta_2=((d-1)/(d+1))^2$. This proves that for estimates on the optimal constant in~\eqref{Ineq:Interp}, there is space for improvements.

\section{Concluding remarks}\label{Sec:Conclusion}

All computations in this paper are based on the action of the flow~\eqref{flow} on the functional $\mathcal F$. The functional $\mathcal F$ is monotonously decaying with respect to time for the right choice of the exponent $\beta$ (and the optimal choice is therefore prescribed by the method) while the solution of~\eqref{flow} converges to a constant. The method gives a meaningful way of reinterpreting the computations of \cite{MR1134481,MR615628,MR1338283,MR1412446,MR1631581} and suggest the improvements of Theorems~\ref{Thm:Main}, \ref{Thm:Main2} and \ref{Thm:Inequality}. However, the use of the flow itself is not mandatory except in the proof of Theorem~\ref{Thm:Inequality} to handle the case $\lambda=\Lambda_\star$ and in the extensions corresponding to Theorem~\ref{Thm:General} and Corollary~\ref{Cor:InequalityRemainder}. The flow approach however paves the road for the various improvements presented in this paper: the integral criteria defining $\lambda_\star$ and $\Lambda_\star$ or the extension to general nonlinearities as in Theorem~\ref{Thm:General}. In Theorem~\ref{Thm:Inequality}, an alternative approach could be used to directly prove that
\[
\iM{(\Delta_gu)^2}+\frac{\theta\,d}{d-1}\iM{\Big[\,\|\mathrm Q_g u\|^2+\Ric(\nabla u,\nabla u)\Big]}-\,\Lambda_\star\iM{|\nabla u|^2}=0
\]
means that $u$ is a constant if $\Lambda_\star<\lambda_1$. Such a result is a consequence of our method, at least when $u$ is a solution of~\eqref{flow} whose initial datum is a minimizer of $\mathcal F$.

The choice~\eqref{Eqn:Choice} is the one that minimizes the value of $\theta$ and therefore maximizes the value of $\Lambda_\star$. However, if we relax a little bit the condition on $\theta\in(0,1]$ and take $\theta$ such that
\[
\frac{(d-1)^2\,(p-1)}{d\,(d+2)+p-1}<\theta<1\;,
\]
then there is a whole interval $[\beta_-,\beta_+]$ of values of $\beta$ for which the method of Section~\ref{Sec:Flow} still works, \emph{i.e.}, for which $\mu$ is non-positive, for any $p\in(1,2)\cup(2,2^*)$. The extreme case $\theta=1$ has been considered in Lemma~\ref{Lem:beta} and Corollary~\ref{Cor:Theta=1}. If $d\ge 3$ and $p=2^*$, it follows from \eqref{Eqn:Choice} that $\theta=1$ and $\beta$ is uniquely defined. Consistently, we observe that $\lim_{p\to2^*}(\beta_+(p)-\beta_-(p))=0$, so that the interval shrinks to a point when $p$ approaches the critical value $2^*$, $d\ge3$. Alternatively, there are values of $\beta$ for which a whole interval in terms of $p$ can be covered, as it has been noticed in \cite{DEKL}, in the case of the sphere when $\beta=1$: in such a case, the method applies for any $p\in(1,2)\cup(2,2^\sharp]$, where $2^\sharp:=(2\,d^2+1)/(d-1)^2$.

Our paper leaves open the issue of deciding under which conditions the interval $(\Lambda,\lambda_1)$ is empty or not. In other words, are there non-constant minimizers of $\mathcal F$ for some $\lambda<\lambda_1$ ? If $\Lambda_*<\lambda_1$, rigidity results of Theorem~\ref{Thm:Main2} and optimality results of Theorem~\ref{Thm:Inequality} are apparently of different nature, although they coincide in the case of the sphere. It is not difficult to produce examples of manifolds for which $\Lambda_*<\lambda_1$ (take non-constant curvatures and consider the case of $p$ in a neighborhood of $2^*$ when $d\ge3$). However, in this paper we are dealing only with sufficient conditions and it is therefore not possible to conclude that criteria for rigidity are stronger than criteria for determining the optimal constant in the interpolation Inequality~\eqref{Ineq:Interp}.

\smallskip\noindent{\bf Acknowledgements.} {\small J.D.~and M.J.E.~have been partially supported by ANR grants \emph{CBDif} and \emph{NoNAP}. They are also participating to the MathAmSud project \emph{QUESP} and thank A.~Laptev, F.~Robert, E.~Hebey, M.~Ledoux and C.~Villani for fruitful discussions. J.D.~and M.L.~have been supported in part respectively by ANR grants \emph{STAB} and NSF grant DMS-0901304. The author warmfully thank a referee for detailed comments and suggestions.}
\\[6pt]
{\sl\small\copyright~2014 by the authors. This paper may be reproduced, in its entirety, for non-commercial purposes.}

\end{document}